\newtheorem{thm}{Theorem}[section]
\newtheorem{lemma}[thm]{Lemma}
\newtheorem{cor}[thm]{Corollary}
\theoremstyle{remark}
\newtheorem{rem}[thm]{Remark}
\theoremstyle{definition}
\newtheorem{defn}[thm]{Definition}
\newtheoremstyle{Claim}{}{}{\itshape}{}{\itshape\bfseries}{:}{ }{#1}
\theoremstyle{Claim}
\newcommand{\R}{\mathbb{R}}
\newcommand{\X}{\mathcal{X}}
\newcommand{\eps}{\varepsilon}
\theoremstyle{plain}
\def\sideremark#1{\ifvmode\leavevmode\fi\vadjust{
\vbox to0pt{\hbox to 0pt{\hskip\hsize\hskip1em
\vbox{\hsize3cm\tiny\raggedright\pretolerance10000
\noindent #1\hfill}\hss}\vbox to8pt{\vfil}\vss}}}
\begin{document}

\title[]{H\"older regularity and Liouville properties for nonlinear elliptic inequalities with power-growth gradient terms}


%
\author{Alessandro Goffi}
\address{Dipartimento di Matematica ``Tullio Levi-Civita'', Universit\`a degli Studi di Padova, 
via Trieste 63, 35121 Padova (Italy)}
\curraddr{}
\email{alessandro.goffi@unipd.it}
\thanks{}

\subjclass[2010]{Primary: 35B53, 35B65, 35J62}
\keywords{Degenerate equations, Equations of divergence-type, H\"older regularity, Nonexistence of entire weak solutions, quasilinear Hamilton-Jacobi equations, Riemannian manifolds}
 \thanks{
 The author is member of the Gruppo Nazionale per l'Analisi Matematica, la Probabilit\`a e le loro Applicazioni (GNAMPA) of the Istituto Nazionale di Alta Matematica (INdAM)
 }

\date{\today}

\begin{abstract}
This note studies local integral gradient bounds for distributional solutions of a large class of partial differential inequalities with diffusion in divergence form and power-like first-order terms. The applications of these estimates are two-fold. First, we show the (sharp) global H\"older regularity of distributional semi-solutions to this class of diffusive PDEs with first-order terms having supernatural growth and right-hand side in a suitable Morrey class posed on a bounded and regular open set $\Omega$. Second, we provide a new proof of entire Liouville properties for inequalities with superlinear first-order terms without assuming any one-side bound on the solution for the corresponding homogeneous partial differential inequalities. We also discuss some extensions of the previous properties to problems arising in sub-Riemannian geometry and also to partial differential inequalities posed on noncompact complete Riemannian manifolds under appropriate area-growth conditions of the geodesic spheres, providing new results in both these directions. The methods rely on integral arguments and do not exploit maximum and comparison principles.
\end{abstract}

\maketitle


\section{Introduction}

In this note we analyze some quantitative and qualitative results for the following partial differential inequality (PDI in the sequel)
\begin{equation}\label{eqgen}
-\mathrm{div}(\mathcal{A}(x,u,\nabla u))\geq \mathcal{B}_i(x,u,\nabla u)\text{ in }\Omega\ ,
\end{equation}
where $\Omega$ will be either a bounded open set in $\R^N$ or the whole space itself, $N> 1$,  $\mathcal{A}:\Omega\times\R\times\R^N\to\R^N$ is a Carath\'eodory function (namely measurable in $x$ and continuous in the $u,\nabla u$ entries) such that
\begin{equation}\label{A}\tag{A}
|\mathcal{A}(x,s,\xi)|\leq \nu|\xi|^{p-1}\ ,\nu>0\ ,
\end{equation}
for every $(s,\xi)\in\R\times\R^N$, a.e. $x\in\Omega$, and
\begin{equation}\label{B1}\tag{B1}
\mathcal{B}_1(x,s,\xi)\geq c_H|\xi|^\gamma-\lambda s-f(x),
\end{equation}
\begin{equation}\label{B2}\tag{B2}
\mathcal{B}_2(x,s,\xi)\geq c_H|\xi|^\gamma.
\end{equation}
Here $\mathcal{B}_1:\Omega\times\R\times\R^N\to\R$, $\mathcal{B}_2:\R\times\R^N\to\R$ , $\lambda\geq0$, $c_H>0$ and $f$ is a measurable source term belonging to $\mathcal{L}^{1,\frac{N}{q}}(\Omega)$, the Morrey space of parameters $(1,N/q)$, $q\geq1$, $\gamma>p-1$, $p>1$. A prototype example of an equation satisfying the previous assumptions is the quasilinear Hamilton-Jacobi/Riccati equation driven by the $p$-Laplacian \cite{Lions85,LP,VeronJFA,CGell,CGpar}
\[
-\mathrm{div}(|\nabla u|^{p-2}\nabla u)+\lambda u+|\nabla u|^\gamma=g(x)\ .
\]
Our main results are the following. The first is a quantitative result for \textit{semi-solutions} to PDIs, that is summarized in the next
\begin{thm}\label{main1}
Let $\Omega$ be an open bounded and connected subset of $\R^N$ having Lipschitz boundary and satisfying the uniform interior sphere condition. Assume \eqref{A} and \eqref{B1}, $\gamma>p$, $\lambda\geq0$ and $f\in \mathcal{L}^{1,\frac{N}{q}}(\Omega)$ for some $q>\frac{N}{\gamma}$. Let $u\in W^{1,\gamma}_{\mathrm{loc}}(\Omega)$  such that $\lambda u^+\in \mathcal{L}^{1,\frac{N}{q}}(\Omega)$ which satisfies, in the sense of distributions, the inequality 
\[
-\mathrm{div}(\mathcal{A}(x,u,\nabla u))\geq \mathcal{B}_1(x,u,\nabla u)\text{ in }\Omega\ .
\] 
Then $u$ is H\"older continuous up to the boundary (namely in the whole $\overline{\Omega}$) and satisfies
\[
|u(x)-u(y)|\leq K|x-y|^\alpha\ ,\forall x,y\in\overline{\Omega},
\]
where
\[
\alpha=\min\left\{1-\frac{N}{q\gamma},\frac{\gamma-p}{\gamma-(p-1)}\right\},
\]
and $K$ depends on $p,q,\gamma,N,\nu,\Omega, \|f+\lambda u^+\|_{\mathcal{L}^{1,\frac{N}{q}}(\Omega)}$. In particular, the estimate is satisfied when $f,\lambda u^+\in L^q(\Omega)$, $q>\frac{N}{\gamma}$.
\end{thm}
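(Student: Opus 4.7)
My plan is to prove Theorem~\ref{main1} in two stages. The first is a local integral Morrey-type estimate for $|\nabla u|^\gamma$ on balls compactly contained in $\Omega$; the second is a geometric argument based on the uniform interior sphere condition which transfers this estimate up to $\partial\Omega$, so that the H\"older exponent $\alpha$ comes out via Campanato's characterization of $C^{0,\alpha}$.

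For the interior step I would test the PDI against $\xi^\beta$, with $\xi\in C_c^\infty(\Omega)$ nonnegative and $\beta\geq \gamma/(\gamma-p+1)$. By \eqref{A} and \eqref{B1} this yields
\[
c_H\int|\nabla u|^\gamma\xi^\beta \leq \nu\beta\int|\nabla u|^{p-1}|\nabla\xi|\,\xi^{\beta-1} + \int(f+\lambda u^+)\xi^\beta.
\]
Writing $\xi^{\beta-1}=\xi^{\beta(p-1)/\gamma}\cdot \xi^{\beta(\gamma-p+1)/\gamma-1}$ and applying Young's inequality with conjugate exponents $\gamma/(p-1)$ and $\gamma/(\gamma-p+1)$ absorbs the $|\nabla u|^{p-1}$ term into the coercive $|\nabla u|^\gamma$ one (this is where $\gamma>p-1$ is decisive, while $\gamma>p$ will later make $\alpha>0$), leaving
\[
\int|\nabla u|^\gamma\xi^\beta \leq C\int|\nabla\xi|^{\gamma/(\gamma-p+1)}\xi^{\beta-\gamma/(\gamma-p+1)} + C\int(f+\lambda u^+)\xi^\beta.
\]
Choosing $\xi$ as a standard cutoff equal to $1$ on $B_r(x_0)$ and supported in $B_{2r}(x_0)\subset\Omega$, with $|\nabla\xi|\leq c/r$, the first term is $\leq C r^{N-\gamma/(\gamma-p+1)}$ and the Morrey assumption on $f+\lambda u^+$ bounds the second by $C\|f+\lambda u^+\|_{\mathcal{L}^{1,N/q}}\,r^{N-N/q}$. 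For $r$ bounded,
\[
\int_{B_r(x_0)}|\nabla u|^\gamma\,dx \leq C\,r^{N-\gamma(1-\alpha)}, \qquad \alpha=\min\!\Big\{1-\tfrac{N}{q\gamma},\tfrac{\gamma-p}{\gamma-p+1}\Big\}.
\]
Poincar\'e's inequality and Campanato's theorem then give $u\in C^{0,\alpha}_{\mathrm{loc}}(\Omega)$ with the quantitative constant.

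To promote this bound to $C^{0,\alpha}(\overline{\Omega})$, I would invoke the uniform interior sphere condition. For $x_0\in\partial\Omega$, take the interior ball $B_{r_0}(y_0)\subset\Omega$ tangent to $\partial\Omega$ at $x_0$; then for every $0<\rho<r_0$ the point $z_\rho$ on the segment from $x_0$ to $y_0$ at distance $\rho$ from $x_0$ satisfies $B_{\rho/2}(z_\rho)\subset B_{3\rho/2}(x_0)\cap\Omega$ and $B_\rho(z_\rho)\subset\Omega$, so the previous local estimate applies on $B_{\rho/4}(z_\rho)$ with the correct scaling. Decomposing $B_r(x_0)\cap\Omega$ by a Whitney-type family of such interior balls (with radii comparable to the distance to $\partial\Omega$), summing the local bounds, and exploiting the uniformity of $r_0$ to sum the resulting geometric series, I would derive
\[
\int_{B_r(x_0)\cap\Omega}|\nabla u|^\gamma\,dx \leq C\,r^{N-\gamma(1-\alpha)}\qquad\text{for all }x_0\in\overline{\Omega},
\]
with constants depending only on the structural data and on $r_0,\Omega$. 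Campanato's theorem on Lipschitz domains then yields the global H\"older estimate on $\overline{\Omega}$ with exponent $\alpha$ and constant $K$ as stated.

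The algebra of Young exponents in the interior step and the Campanato embedding are essentially routine once the conjugate pair $(\gamma/(p-1),\gamma/(\gamma-p+1))$ is identified. The real technical obstacle is the boundary part: since the distributional formulation only tests against functions compactly supported in $\Omega$, boundary H\"older regularity cannot be read off by a direct testing argument, but must be produced geometrically from the interior gradient bound. The uniform interior sphere hypothesis (stronger than plain Lipschitz regularity of $\partial\Omega$) is what keeps the constants in the Whitney-type summation uniform as $x_0$ ranges over $\partial\Omega$, so that the boundary H\"older exponent matches the interior one instead of being degraded.
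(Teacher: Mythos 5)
Your interior step is, modulo presentation, the paper's own: you test the distributional inequality with $\eta^\beta$ for $\beta=\gamma/(\gamma-(p-1))$, absorb the diffusion term by Young's inequality with the conjugate pair $\bigl(\gamma/(p-1),\gamma/(\gamma-(p-1))\bigr)$, and obtain exactly the Caccioppoli bound of Lemma~\ref{cac}, namely $\int_{B_r}|\nabla u|^\gamma\lesssim r^{N-s}$ with $s=\max\{N/q,\gamma/(\gamma-(p-1))\}$. The paper then passes through H\"older's inequality to an $L^1$-Morrey bound on $\nabla u$ and invokes the potential estimate of [GT, Theorem 7.19]; you use Poincar\'e plus Campanato on the $L^\gamma$-gradient bound. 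These are equivalent routes to the interior $C^{0,\alpha}$ conclusion of Theorem~\ref{localHolder}.

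The boundary step, however, does not work as proposed. Your plan is to Whitney-decompose $B_r(x_0)\cap\Omega$ for $x_0\in\partial\Omega$ into interior balls of radius comparable to the distance to $\partial\Omega$, apply the Caccioppoli bound on each, and sum a geometric series. At dyadic scale $2^{-k}$ there are of order $(r2^k)^{N-1}$ such balls (the boundary is $(N-1)$-dimensional), each contributing $\lesssim (2^{-k})^{N-s}$; the sum is $\sim r^{N-1}\sum_k (2^{-k})^{1-s}$, and since $s\geq\gamma/(\gamma-(p-1))>1$ for every $p>1$, the series \emph{diverges}. This is not an artifact of crude counting: the target estimate $\int_{B_r(x_0)\cap\Omega}|\nabla u|^\gamma\lesssim r^{N-s}$ is simply false at boundary points. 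Take $p=2$, $\Omega$ a smooth domain whose boundary contains a piece of $\{x_N=0\}$, and $u(x)=c\,x_N^\alpha$ with $\alpha=\frac{\gamma-2}{\gamma-1}$; for $c>0$ small this is a classical supersolution of $-\Delta u\geq|\nabla u|^\gamma$, is $C^{0,\alpha}$ up to the boundary (so the theorem's conclusion holds), yet $|\nabla u|^\gamma\sim x_N^{-\gamma/(\gamma-1)}$ with $\gamma/(\gamma-1)>1$, hence $\int_{B_r(x_0)\cap\Omega}|\nabla u|^\gamma=+\infty$ for every $r>0$ and every $x_0$ on that boundary piece. Thus no boundary Campanato/Morrey estimate on $|\nabla u|^\gamma$ can be established, and the Poincar\'e--Campanato route is blocked precisely at the boundary. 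The paper circumvents this by never estimating boundary integrals of $|\nabla u|^\gamma$: it invokes Lemma~\ref{ext} (\cite[Lemma 2.6]{CDLP}), an \emph{extension lemma at the level of the function}, which uses the uniform interior sphere condition together with Lipschitz regularity of $\partial\Omega$ to connect any pair $x,y\in\overline\Omega$ by a chain of balls contained in $\Omega$ and telescope the uniform interior H\"older estimate along the chain. Because a H\"older modulus is a sup-type quantity, chaining it does not overcount the way an integral sum over a Whitney family does; this is the idea your argument is missing and should replace the Whitney/Campanato boundary step.
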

The second is a Liouville/Bernstein-type result and states the nonexistence of nontrivial solutions to \eqref{eqgen} for the corresponding homogeneous PDE/PDI without zero-th order terms on noncompact complete Riemannian manifolds $M$ in terms of \textit{the growth rate of the area of the geodesic spheres}.
\begin{thm}\label{main3}
Let $(M,g)$ be a noncompact complete Riemannian manifold. If $p>1$, $\gamma>p-1$ and
\begin{equation}\label{areanl}
\int^{+\infty}\frac{1}{(\mathrm{area}(\partial B_t(o)))^{\frac{\gamma-(p-1)}{p-1}}}\,dt=+\infty
\end{equation}
for some origin $o\in M$, then any distributional solution of
\begin{equation}\label{hjmanifold}
-\mathrm{div} (\mathcal{A}(x,u,\nabla u))\geq \mathcal{B}_2(x,u,\nabla u)\text{ in }M,
\end{equation}
where $\mathcal{A}$ satisfies \eqref{A}, must be a.e. constant on $M$. Similarly, if \eqref{areanl} holds, any distributional solution to  
\begin{equation}\label{hjmanifold}
-\mathrm{div} (\mathcal{A}(x,u,\nabla u))+ c_H|\nabla u|^\gamma=0\text{ in }M
\end{equation}
with $\gamma>p-1$ must be a.e. constant on $M$.
\end{thm}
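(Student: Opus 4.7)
The plan is to derive a Riemannian Caccioppoli--type $L^\gamma$ gradient bound and to pair it with a sharp radial cutoff built from the area--growth assumption \eqref{areanl}; the algebraic heart of the argument is the intrinsic analogue of the local integral estimate underlying Theorem \ref{main1}.

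First, I would test the distributional inequality $-\mathrm{div}(\mathcal{A}(x,u,\nabla u))\ge c_H|\nabla u|^\gamma$ against $\eta^\beta$, with $\eta$ a nonnegative compactly supported Lipschitz function on $M$, $0\le\eta\le 1$, and $\beta>\sigma:=\gamma/(\gamma-(p-1))$. Integration by parts (legitimate since $u\in W^{1,\gamma}_{\mathrm{loc}}(M)$ and $\eta^\beta$ is Lipschitz with compact support) together with \eqref{A} yields
\[
c_H \int_M |\nabla u|^\gamma \eta^\beta\,dv_g \le \beta\nu \int_M |\nabla u|^{p-1}|\nabla \eta|\,\eta^{\beta-1}\,dv_g.
\]
Splitting the integrand as $(|\nabla u|^{p-1}\eta^{(p-1)\beta/\gamma})\cdot(|\nabla\eta|\eta^{\beta-1-(p-1)\beta/\gamma})$ and applying Young's inequality with conjugate exponents $\gamma/(p-1)$ and $\sigma$ absorbs the gradient factor into the left--hand side and produces the key bound
\[
\int_M |\nabla u|^\gamma \eta^\beta\,dv_g \le C \int_M |\nabla \eta|^\sigma \eta^{\beta-\sigma}\,dv_g,
\]
with $C=C(\nu,c_H,p,\gamma,\beta)$.

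Next, I would specialize to $\eta(x)=\phi_R(r(x))$, where $r(x)=\mathrm{dist}_g(x,o)$ and $\phi_R\colon[0,\infty)\to[0,1]$ is a nonincreasing Lipschitz cutoff with $\phi_R(0)=1$ and $\supp\phi_R\subset[0,R]$. Since $|\nabla r|=1$ almost everywhere on $M$, the coarea formula recasts the bound as
\[
\int_M |\nabla u|^\gamma \phi_R(r)^\beta\,dv_g \le C \int_0^R |\phi_R'(t)|^\sigma \phi_R(t)^{\beta-\sigma}\,\mathrm{area}(\partial B_t(o))\,dt.
\]
A one--dimensional calculus of variations argument (Euler--Lagrange for $\int_0^R |\phi'|^\sigma a(t)\,dt$ under $\phi(0)=1$, $\phi(R)=0$, with $a(t):=\mathrm{area}(\partial B_t(o))$) shows that the minimum over admissible $\phi_R$ equals
\[
\left(\int_0^R \mathrm{area}(\partial B_t(o))^{-(\gamma-(p-1))/(p-1)}\,dt\right)^{-(p-1)/(\gamma-(p-1))},
\]
since $1/(\sigma-1)=(\gamma-(p-1))/(p-1)$. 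The divergence in \eqref{areanl} drives this quantity to zero as $R\to\infty$, and Fatou's lemma then forces $\int_M |\nabla u|^\gamma\,dv_g = 0$, whence $u$ is a.e.\ constant on the connected manifold $M$.

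The equation case reduces immediately to the PDI case: if $-\mathrm{div}(\mathcal{A}(x,u,\nabla u)) + c_H|\nabla u|^\gamma = 0$, then $\widetilde{\mathcal{A}}:=-\mathcal{A}$ still obeys \eqref{A} and $u$ is a distributional solution of $-\mathrm{div}(\widetilde{\mathcal{A}})\ge c_H|\nabla u|^\gamma$ on $M$. I expect the main technical obstacle to be the rigorous use of $\phi_R(r)^\beta$ as an admissible test function, given that the distance $r$ is only Lipschitz (and may fail to be smooth along the cut locus) and that the optimal $\phi_R$ is not $C^1$ at $t=R$; this is handled by a standard approximation of $\phi_R$ by smooth nonincreasing cutoffs and by the a.e.\ identity $|\nabla r|=1$, which is all that is needed in the coarea step.
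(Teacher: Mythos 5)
Your proof is correct in substance, but it follows a genuinely different path from the one in the paper. The paper (Lemma \ref{prelman}, building on Remarks \ref{simp}--\ref{weakdiv}) integrates the divergence inequality over $B_r$, applies H\"older on $\partial B_r$, sets $\sigma(r)=\int_{B_r}|\nabla u|^\gamma$, and uses the co-area formula to identify $\sigma'(r)$ with the boundary integral; this produces a first-order differential inequality $\sigma'(r)(\sigma(r))^{-\gamma/(p-1)}\geq C(\mathrm{area}(\partial B_r))^{-(\gamma-(p-1))/(p-1)}$, which is then integrated on $[R,r]$ (with $R\geq\overline R$ chosen so $\sigma(R)>0$) to reach the contradiction under \eqref{areanl}. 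You instead run a Caccioppoli estimate with a test function $\eta^\beta$ (as in Lemma \ref{cac} with $f=\lambda=0$), specialize to a radial cutoff, and identify the sharp area-growth assumption by solving the one-dimensional Euler--Lagrange problem for $\int_0^R |\phi'|^\sigma a(t)\,dt$. Both routes are integral arguments that avoid comparison principles and both land on the same critical quantity $\bigl(\int a(t)^{-(\gamma-(p-1))/(p-1)}\,dt\bigr)^{-(p-1)/(\gamma-(p-1))}$; the paper's ODE reading makes the role of the spherical area transparent via co-area, while your cutoff-optimization is more self-contained (no need to discuss differentiability of $\sigma$) and meshes naturally with the test-function machinery used elsewhere in the paper. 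One small technical caveat to address: your explicit optimizer $\phi_R(s)=1-\bigl(\int_0^s a^{-1/(\sigma-1)}\bigr)/\bigl(\int_0^R a^{-1/(\sigma-1)}\bigr)$ requires $\int_0 a(t)^{-1/(\sigma-1)}\,dt<\infty$, which fails in the borderline Euclidean regime $\gamma=\tfrac{N(p-1)}{N-1}$ (there $\sigma=N$ and $a(t)^{-1/(\sigma-1)}\sim t^{-1}$ near $0$); the remedy, mirroring the paper's choice to start the radial integration at some $\overline R>0$, is to freeze $\phi_R\equiv 1$ on a fixed small interval $[0,\rho]$ and optimize only on $[\rho,R]$, which still sends the right-hand side to $0$ under \eqref{areanl} and yields $\nabla u=0$ a.e.\ on $B_\rho$ for every $\rho$.
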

The previous result leads to the following property when specialized to the Euclidean space $M=\R^N$.
\begin{cor}\label{main2}
Let $\mathcal{A}$ be satisfying \eqref{A}. Any distributional solution to the inequality
\begin{equation}\label{eqmain1}
-\mathrm{div}(\mathcal{A}(x,u,\nabla u))\geq \mathcal{B}_2(x,u,\nabla u) \text{ in }\R^N
\end{equation}
must be a.e. constant in $\R^N$ provided that
\begin{equation}\label{rangegamma}
p-1<\gamma\leq \frac{N(p-1)}{N-1}\ ,1<p<N\ .
\end{equation}
Similarly, any distributional solution to  \begin{equation}\label{hjeq}
-\mathrm{div} (\mathcal{A}(x,u,\nabla u))+c_H|\nabla u|^\gamma=0\text{ in }\R^N
\end{equation}
must be a.e. constant when $\gamma,p$ vary in the range \eqref{rangegamma}.
\end{cor}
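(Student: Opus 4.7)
The plan is to obtain the corollary as a direct specialization of Theorem \ref{main3} to the Euclidean space $(\R^N,g_{\mathrm{eucl}})$, which is a noncompact complete Riemannian manifold. The only nontrivial step is to translate the area-growth condition \eqref{areanl} into the explicit algebraic range \eqref{rangegamma}.

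First I fix any origin $o\in\R^N$ and note that in this setting geodesic spheres coincide with ordinary Euclidean spheres of radius $t$, so
\[
\mathrm{area}(\partial B_t(o))=N\omega_N\,t^{N-1},
\]
where $\omega_N$ denotes the volume of the unit ball in $\R^N$. Substituting this identity into the integrand of \eqref{areanl} and pulling out the positive constant reduces the convergence question to the divergence of the one-variable power integral
\[
\int^{+\infty} t^{-(N-1)\frac{\gamma-(p-1)}{p-1}}\,dt,
\]
which holds if and only if $(N-1)\frac{\gamma-(p-1)}{p-1}\leq 1$. A direct rearrangement of this threshold gives $\gamma(N-1)\leq N(p-1)$, i.e.\ $\gamma\leq \frac{N(p-1)}{N-1}$. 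Combined with the standing hypothesis $\gamma>p-1$ of Theorem \ref{main3} this yields exactly the admissible range \eqref{rangegamma}; the restriction $1<p<N$ serves only to guarantee that the interval $(p-1,\frac{N(p-1)}{N-1}]$ is nondegenerate and lies in the regime covered by Theorem \ref{main3}.

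With this identification in hand, both parts of Theorem \ref{main3} apply verbatim: the first to the differential inequality \eqref{eqmain1}, the second to the Hamilton--Jacobi equation \eqref{hjeq}, forcing in either case any distributional solution to be a.e.\ constant on $\R^N$. Since the whole analytic apparatus (Caccioppoli-type estimates, test-function construction on the manifold, absorption of the superlinear gradient term) has already been executed inside the proof of Theorem \ref{main3}, no real obstacle remains at the level of the corollary: its proof is the elementary calculation above that converts an integrability threshold on the Riemannian area growth into an explicit algebraic range for the exponents $(p,\gamma)$.
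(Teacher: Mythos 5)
Your deduction of Corollary \ref{main2} as a specialization of Theorem \ref{main3} is correct: in $\R^N$ one has $\mathrm{area}(\partial B_t)=N\omega_N t^{N-1}$, and the divergence of $\int^{+\infty}t^{-(N-1)\frac{\gamma-(p-1)}{p-1}}\,dt$ is exactly the condition $\gamma\leq\frac{N(p-1)}{N-1}$. The paper in fact takes the reverse presentational order — it proves the Euclidean corollary first, directly from the differential inequality for $\sigma(r)=\int_{B_r}|\nabla u|^\gamma$ established in Remark \ref{simp} (inequalities \eqref{alpha<1} and \eqref{alpha1}), and then generalizes those computations to Riemannian manifolds in Lemma \ref{prelman} to obtain Theorem \ref{main3}; the paper even remarks explicitly that it has ``reversed the order of the proofs'' for ease of exposition. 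Since the two proofs rest on the identical Caccioppoli-type differential inequality and weak divergence theorem, your route and the paper's are substantively the same, and yours is the cleaner logical deduction once Theorem \ref{main3} is in hand. One small inaccuracy: the restriction $1<p<N$ is not needed to make the interval $\bigl(p-1,\tfrac{N(p-1)}{N-1}\bigr]$ nonempty — that interval is nondegenerate for every $p>1$, $N>1$ — nor does Theorem \ref{main3} require $p<N$. As the paper explains in Remark \ref{p>N}, the hypothesis $p<N$ appears in the Corollary simply because for $p\geq N$ the Liouville property is already classical (any supersolution satisfies $-\Delta_pu\geq 0$ and $p$-superharmonicity in $\R^N$ for $p\geq N$ forces constancy), so that regime is not interesting here.
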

These results rely on interior integral gradient estimates of Caccioppoli-type of the form
\[
\int_{B_r}|\nabla u|^\gamma\,dx\leq CR^\alpha
\]
for some $\alpha\in\R$, see Lemma \ref{cac}, Remark \ref{simp} and Lemma \ref{prelman}. Such bounds are obtained either by simple integral arguments that exploit test function methods or combine a weak version of the divergence theorem and the co-area formula.\\
Related properties to those in Theorem \ref{main1} for problems with supernatural gradient growth have been already analyzed in the literature. The paper by I. Capuzzo-Dolcetta, F. Leoni and A. Porretta \cite{CDLP} treated a large class of diffusion operators in non-divergence form (that also includes fully nonlinear uniformly elliptic operators and the $p$-Laplacian) with bounded right-hand side $f$ (with estimates depending on $\|u\|_\infty$) in the realm of viscosity solutions. This was revisited with a shorter proof by G. Barles in \cite{B}, again in the framework of viscosity (semi-)solutions. This regularity problem has been much less investigated for unbounded data, where mostly optimal regularity at the level of Lebesgue spaces has been studied, see e.g. \cite{GMP}. An analogue to our Theorem \ref{main1} with unbounded source terms $f\in L^q$ was proved by A. Dall'Aglio and A. Porretta in the case $p=2$ for distributional subsolutions to equations with terms having opposite signs, and it has been the starting point of our work. At this stage, we stress that distributional solutions for these problems are not unique, cf Section \ref{sharp}, and hence the quantitative result in Theorem \ref{main1} takes on a stronger meaning.\\
Recent advances on the H\"older regularity for problems driven by the Laplacian with coercive gradient terms and space-time $L^q$ right-hand side have been obtained for parabolic equations in \cite{CGpar} through quite different integral/duality methods, which, unfortunately, do not extend to quasi-linear operators satisfying \eqref{A}. We further mention that the H\"older regularity for solutions to these equations when $p=2$ has been also the focus of the recent work \cite{CV}, but for solutions solving the PDE in strong sense, looking thus beyond the range of the H\"older bound stated in Theorem \ref{main1} for source terms with better summability. Indeed, we also show that, as far as the H\"older regularity is concerned, the range found in Theorem \ref{main1} is sharp for our class of functions, cf Section \ref{sharp}. It is well-known that these regularity properties are the cornerstone for the analysis of many different problems ranging from those arising in ergodic control and homogenization to problems with state constraints, at least when $p=2$, see \cite{B,BDL}. Nonetheless, the case of quasilinear equations with terms having supernatural growth appears, to our knowledge, still an open line of research, cf \cite{LP,BDL,NP} for some recent contributions.\\
Instead, statements like Theorem \ref{main3}, and its byproduct Corollary \ref{main2}, have been studied (almost) thoroughly and for many years through various different approaches, see e.g. Remark \ref{ref}, Section \ref{alt} and the general reference \cite{PucciBook}. Although the latter is a widely studied problem, we decided to propose a new unifying proof of the result, even in a more general context, due to its shortness, along with the treatment of some new endpoint cases (e.g. the borderline case $\gamma=\frac{N(p-1)}{N-1}$ in Corollary \ref{main2}). Here, our approach to tackle the Liouville property has been inspired by the works \cite{Lions85,RS,PRSmem}. At this point it is remarkable to emphasize that when $p=2$ the threshold $\gamma=\frac{N}{N-1}$ is critical for the solvability and the regularity of solutions of the so-called viscous Hamilton-Jacobi equation, see e.g. \cite{HMV99,CGell}.\\
Notably, the results in Theorems \ref{main1}, \ref{main3} and Corollary \ref{main2} highlight a striking effect of the superlinear gradient term. Indeed, a simple zoom of the equation suggests that such a term is the dominating one at small scales when $\gamma>p$ (i.e. in the regime of Theorem \ref{main1}), and it turns out to be the sole responsible of all the properties analyzed throughout this manuscript, as one can realize by a careful inspection of the proofs of Theorems \ref{main1} and \ref{main3}. The link between the Liouville and the local regularity properties of nonlinear elliptic problems has been sometimes pointed out throughout some works, see e.g. \cite{MeierTAMS,BBF} among others, and our analysis underlines once more the bridge between nonexistence results and the H\"older regularity for semi-solutions of PDEs through the Caccioppoli-type bounds in Lemma \ref{cac} and Remark \ref{simp}. \\

Some comments on the aforementioned results are now in order. On one hand, Theorem \ref{main1} holds for merely \textit{distributional semi-solutions} to PDEs with \textit{supernatural gradient growth} and \textit{unbounded right-hand side}. This is unusual for diffusive problems (e.g. of second order) with first-order terms above the natural growth imposed by the diffusion, since such estimates hold, in general, for \textit{solutions} of \textit{uniformly elliptic problems} with \textit{subnatural gradient growth} and, usually, \textit{bounded data} \cite{LU,BensoussanBook}. Moreover, the estimate in Theorem \ref{main1} holds up to the boundary of the domain and it is a universal estimate for negative solutions.\\
On the other hand, the approach leading to Theorem \ref{main3} does not require any one-side bound on the solution, being thus consistent with the companion results for solutions to the same kind of equations obtained through the Bernstein gradient estimates by L. Peletier-J. Serrin \cite{PS} and P.-L. Lions \cite{Lions85}, and later in \cite{VeronJFA} (see also \cite[Theorem 2.4]{BDL} and \cite[Theorem 1.1]{FPS} for recent refinements). More importantly, in the context of Riemannian manifolds Theorem \ref{main3} does not require curvature conditions on the background geometry. As a matter of fact, we emphasize that curvature bounds have been frequently imposed to derive the Liouville property for solutions to $-\Delta_pu+|Du|^\gamma=0$ when the first-order term has superlinear growth with respect to the diffusion (i.e. $\gamma>p-1$) and the equation is posed on a noncompact manifold, see e.g. \cite[Corollary 4.3]{VeronJFA}. This is a consequence of the use of Bernstein-type methods through the B\"ochner's identity. In this paper we adopt a different viewpoint, as in \cite{RS,PRSjfa}. Indeed, Theorem \ref{main3} shows, in the case of distributional supersolutions to these homogeneous quasi-linear equations, that the Liouville property can be obtained under some area-growth conditions, at least for slowly increasing gradient terms, see Section \ref{sec;riem}. It seems an open problem whether the Liouville result in \cite{VeronJFA} holds for less regular solutions and with less restrictive bounds on the geometry in the general superlinear range $\gamma>p-1$. We further remark that the assumptions we impose in Theorem \ref{main3} are closer to the ones used in \cite{RS,Sun} rather than in \cite{VeronJFA}.\\
Our results apply to general diffusion operators in divergence form. For instance $-\mathrm{div}(\mathcal{A}(x,u,\nabla u))$ can be one of the following:
\begin{itemize}
\item the (negative) Laplacian $-\Delta u=-\mathrm{div}(\nabla u)$ or the mean-curvature operator $-\mathrm{div}\left(\frac{\nabla u}{\sqrt{1+|\nabla u|^2}}\right)$ when $p=2$;
\item the $p$-Laplacian $-\Delta_pu=-\mathrm{div}(|\nabla u|^{p-2}\nabla u)$ for $p>1$;
\item the generalized mean-curvature operator $-\mathrm{div}\left(|\nabla u|^{k-2}\frac{\nabla u}{\sqrt{1+|\nabla u|^k}}\right)$, $k\geq2$, for $p=\frac{k}{2}$.
\end{itemize}
Moreover, the properties in Theorems \ref{main1} and \ref{main2} also extend to the subelliptic framework when the standard derivatives in the aforementioned operators are replaced with derivatives along vector fields satisfying the H\"ormander's rank condition, i.e. for the subelliptic inequality
\[
-\mathrm{div}(\mathcal{A}(x,u,\nabla_\X u))\geq |\nabla_\X u|^\gamma-f(x),
\]
where $\nabla_\X$ stands for the horizontal gradient built over the frame $\X$, see e.g. Theorem \ref{holderHorm} in Section \ref{sec;sub} for more details. This is the case, for instance, of problems structured over the fields generating a Carnot group of step 2, the main prototype being the Heisenberg group. The interior H\"older regularity for such subelliptic problems with power-like first-order terms on the horizontal gradient seems new to our knowledge.\\
Moreover, the results in Theorems \ref{main1} and \ref{main2} can be stated analogously for distributional subsolutions, as in e.g. \cite{DP} for the case $p=2$, by reversing the signs in the assumptions \eqref{B1} and \eqref{B2}. For instance, the conclusions of Theorem \ref{main1} hold for distributional subsolutions to the model PDI
\[
-\mathrm{div}(\mathcal{A}(x,u,\nabla u))+\lambda u+|\nabla u|^\gamma\leq f(x)\text{ in }\Omega\ ,
\]
under essentially the same regularity assumptions on the data, while the Liouville property in Theorem \ref{main2} holds for the weak PDI
\[
-\mathrm{div}(\mathcal{A}(x,u,\nabla u))+c_H|\nabla u|^\gamma\leq0\text{ in }\R^N
\]
under the hypothesis \eqref{A}. \\
We conclude by saying that our underlying aim here is also to lay the groundwork to investigate the problem of the optimal gradient regularity in Lebesgue spaces for equations with diffusion in divergence form as those modeled over the $p$-Laplacian, as initiated in \cite{CGell,CGpar} for the viscous Hamilton-Jacobi equation, through blow-up arguments, see \cite{BBF,CV}.\\
\par\smallskip
\textit{Plan of the paper}. Section \ref{sec;cac} introduces some preliminary definitions and contains the main integral bounds needed for the following sections. Section \ref{sec;hol} studies the interior and global H\"older regularity for semi-solutions, while Section \ref{sec;lio} the Liouville property for homogeneous PDIs in the Euclidean space, that is Corollary \ref{main2}. Section \ref{sec;sub} and \ref{sec;riem} conclude the paper with the local H\"older regularity for solutions to quasi-linear subelliptic inequalities and the Liouville properties for inequalities posed on Riemannian manifolds.
\section{Caccioppoli-type inequalities}\label{sec;cac}
We begin with some preliminary definitions and notations.
\begin{defn}
Let $\Omega$ be either a bounded open set or the whole Euclidean space $\R^N$. We say that $u\in W^{1,p}_{\text{loc}}(\Omega)$  is a distributional supersolution of \eqref{eqgen} if $\mathcal{A}(\cdot,u,\nabla u)\in L^{p'}_{\mathrm{loc}}(\Omega)$ with $\mathcal{B}(\cdot,u,\nabla u)\in L^1_{\mathrm{loc}}(\Omega)$ and
\[
\int_\Omega \mathcal{A}(x,u,\nabla u)\cdot \nabla \varphi\,dx\geq \int_\Omega  \mathcal{B}(x,u,\nabla u)\varphi\,dx\ ,\varphi\in C_0^1(\Omega)\ ,\varphi\geq0\ .
\]
\end{defn}
\begin{rem}
When $\gamma>p$ as in Theorem \ref{main1}, it will be enough to consider distributional supersolutions belonging to $u\in W^{1,\gamma}_{\text{loc}}(\Omega)$ in view of the inclusions of Lebesgue spaces.
\end{rem}
We set $u^+=\max\{u,0\}$ and $u^-=\max\{-u,0\}$. If $r\in(1,\infty)$, we denote by $r'=\frac{r}{r-1}$ its H\"older conjugate exponent. Recall that for $s\geq1$ and $\theta\in(0,N]$, the Morrey space $\mathcal{L}^{s,\theta}(\Omega)$  comprises those functions $h\in L^s(\Omega)$ such that
\[
\int_{B_r(z)\cap\Omega}|h(x)|^s\leq Cr^{N-\theta}
\]
for all $z\in\Omega$, $r\in(0,\mathrm{diam}(\Omega)]$ and $C$ independent of $z$ and $r$. This space is equipped with the norm
\[
\|h\|_{\mathcal{L}^{s,\theta}(\Omega)}:=\sup_{z\in\Omega;0<r\leq \mathrm{diam}(\Omega)} r^{\frac{\theta-N}{s}}\|h\|_{L^s(B_r(z)\cap\Omega)}.
\]
Clearly, if $s=1$ and $\theta=N/q$, $q\geq1$, any function $h\in L^q(\Omega)$ satisfies the bound
\[
\int_{B_r(z)\cap\Omega}|h(x)|\leq Cr^{N-\frac{N}{q}}
\]
by the H\"older's inequality. For the properties that will be stated in the Euclidean setting, $\mathcal{H}^{N-1}$ stands for the ($N-1$)-Hausdorff measure, $B_r(x)$ is the Euclidean ball with center $x$ and radius $r$, hence $\mathrm{area}(\partial B_r(x))=N\omega_N r^{N-1}$ and $\mathrm{Vol}(B_r(x))=\omega_Nr^N$, where $\omega_N$ stands for the measure of the unit Euclidean ball. \\
The main result of this section is the following Caccioppoli estimate.
\begin{lemma}\label{cac}
Let $\gamma>p-1$, $\lambda\geq0$, $f\in \mathcal{L}^{1,\frac{N}{q}}(\Omega)$, $q\geq1$. Assume that \eqref{A} and \eqref{B1} hold. Let $u$ be a distributional supersolution of \eqref{eqgen} such that $\lambda u^+\in \mathcal{L}^{1,\frac{N}{q}}(\Omega)$. Then, for every pair of concentric balls $B_t\subset B_R\subset \Omega$ we have
\[
\int_{B_t}|\nabla u|^\gamma\,dx+\lambda\int_{B_t}u^-\,dx\leq K\frac{R^N}{(R-t)^s}
\]
where $s=\max\left\{\frac{N}{q},\frac{\gamma}{\gamma-(p-1)}\right\}$ and $K$ is a constant depending on $\nu,p,\gamma,q,N,c_H$ and on $\|f+\lambda u^+\|_{\mathcal{L}^{1,\frac{N}{q}}(B_R)}$. 

\end{lemma}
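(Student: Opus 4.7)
The plan is to exploit the coercive gradient term by testing the distributional inequality against a non-negative cutoff function raised to a suitable power, and then absorb the divergence-side contribution via Young's inequality, in the classical Caccioppoli spirit.

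First I would fix concentric balls $B_t \subset B_R \subset \Omega$ and a standard smooth cutoff $\eta \in C_0^1(B_R)$ with $\eta \equiv 1$ on $B_t$, $0 \leq \eta \leq 1$, and $|\nabla \eta| \leq C/(R-t)$. I would then test the supersolution inequality against $\varphi = \eta^\theta$ for a power $\theta \geq \gamma/(\gamma-(p-1))$ to be fixed. Using \eqref{A} and \eqref{B1} this produces
\[
c_H \int |\nabla u|^\gamma \eta^\theta \, dx + \lambda \int u^- \eta^\theta \, dx \leq \nu \theta \int |\nabla u|^{p-1} \eta^{\theta-1} |\nabla \eta| \, dx + \int (f + \lambda u^+) \eta^\theta \, dx,
\]
splitting $\lambda u = \lambda u^+ - \lambda u^-$ to move the good sign to the left.

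Next I would estimate the first right-hand term by Young's inequality with conjugate exponents $\gamma/(p-1)$ and $\gamma/(\gamma-(p-1))$, writing
\[
|\nabla u|^{p-1} \eta^{\theta-1} |\nabla \eta| = \bigl(|\nabla u|^{p-1} \eta^{\theta(p-1)/\gamma}\bigr) \cdot \bigl(\eta^{\theta - 1 - \theta(p-1)/\gamma} |\nabla \eta|\bigr),
\]
so that the first factor raised to $\gamma/(p-1)$ yields $|\nabla u|^\gamma \eta^\theta$, and the second factor raised to $\gamma/(\gamma-(p-1))$ yields $\eta^{\theta - \gamma/(\gamma-(p-1))} |\nabla \eta|^{\gamma/(\gamma-(p-1))}$. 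With $\theta$ chosen exactly so that the latter exponent of $\eta$ is non-negative (hence the cutoff stays bounded), absorbing the $|\nabla u|^\gamma \eta^\theta$ term into the left with $\varepsilon$ small gives
\[
\int_{B_t} |\nabla u|^\gamma + \lambda \int_{B_t} u^- \leq C \int_{B_R} |\nabla \eta|^{\gamma/(\gamma-(p-1))} \, dx + \int_{B_R} (f + \lambda u^+) \, dx.
\]

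The two terms on the right are then bounded directly. The cutoff integral contributes $C R^N/(R-t)^{\gamma/(\gamma-(p-1))}$ from $|\nabla \eta| \leq C/(R-t)$ and $|B_R| \leq C R^N$; the source term is bounded by the Morrey norm hypothesis on $f + \lambda u^+$ by $C \|f + \lambda u^+\|_{\mathcal{L}^{1,N/q}(B_R)} R^{N-N/q} \leq C' R^N/(R-t)^{N/q}$ since $R - t \leq R$. Collecting both exponents into $s = \max\{N/q, \gamma/(\gamma-(p-1))\}$ gives the claimed inequality.

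The main obstacle will be pinning down the exponent bookkeeping in Young's inequality so that the cutoff power on $\eta$ is non-negative (ensuring the integrand is controlled by $|\nabla \eta|^{\gamma/(\gamma-(p-1))}$), which forces the precise threshold $\theta \geq \gamma/(\gamma-(p-1))$ and is exactly what produces the first contribution to the exponent $s$. The Morrey assumption on $f + \lambda u^+$ is then what generates the second contribution $N/q$, and the maximum of the two yields the announced rate. Aside from this, all remaining steps are routine manipulations, and the constants depend only on $p, \gamma, q, N, \nu, c_H$ and $\|f+\lambda u^+\|_{\mathcal{L}^{1,N/q}(B_R)}$ as stated.
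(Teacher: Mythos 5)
Your proposal is correct and follows essentially the same path as the paper: test with $\varphi=\eta^{\gamma/(\gamma-(p-1))}$, apply Young's inequality with conjugate exponents $\bigl(\tfrac{\gamma}{p-1},\tfrac{\gamma}{\gamma-(p-1)}\bigr)$ to absorb the $|\nabla u|^\gamma\eta^\theta$ term, and estimate the cutoff and Morrey contributions using $R-t\leq R$ to arrive at the maximum exponent $s$. The exponent bookkeeping (the requirement $\theta\geq\gamma/(\gamma-(p-1))$ so the residual power of $\eta$ is non-negative) is exactly what the paper does with the power fixed at that threshold.
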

\begin{proof}
Let $C$ be a generic constant depending on the data $\nu,p,\gamma,q,N,c_H$. Let $\eta$ be a $C^1$ cut-off function such that $0\leq\eta\leq1$, $\eta\equiv1$ on $B_t$, $\eta\equiv0$ outside $B_R$, $|\nabla \eta|\leq \frac{C}{R-t}$. 
We use $\varphi=\eta^\frac{\gamma}{\gamma-(p-1)}$ as a test function in the distributional formulation of the inequality \eqref{eqgen}, together with \eqref{B1}, to obtain
\begin{multline*}
c_H\int_\Omega |\nabla u|^\gamma\eta^\frac{\gamma}{\gamma-(p-1)}\,dx+\lambda \int_\Omega u^-\eta^\frac{\gamma}{\gamma-(p-1)}\,dx\\
\leq \frac{\gamma}{\gamma-(p-1)}\int_\Omega (\mathcal{A}(x,u,\nabla u)\cdot \nabla \eta)\eta^{\frac{p-1}{\gamma-(p-1)}}\,dx+\int_\Omega (f+\lambda u^+)\eta^\frac{\gamma}{\gamma-(p-1)}\,dx.
\end{multline*}
Observe that by the weighted Young's inequality with exponents $(\frac{\gamma}{\gamma-(p-1)},\frac{\gamma}{p-1})$ we have
\begin{multline*}
\frac{\gamma}{\gamma-(p-1)}\int_\Omega (\mathcal{A}(x,u,\nabla u)\cdot \nabla \eta)\eta^{\frac{p-1}{\gamma-(p-1)}}\,dx\leq \frac{\gamma\nu}{\gamma-(p-1)}\int_\Omega |\nabla u|^{p-1}|\nabla\eta|\eta^{\frac{p-1}{\gamma-(p-1)}}\,dx\\
\leq \frac{c_H}{2}\int_\Omega |\nabla u|^\gamma\eta^\frac{\gamma}{\gamma-(p-1)}\,dx+C(c_H,\gamma,p,\nu)\int_\Omega |\nabla\eta|^{\frac{\gamma}{\gamma-(p-1)}}\,dx.
\end{multline*}
We then get
\begin{align*}
\frac{c_H}{2}&\int_\Omega |\nabla u|^\gamma\eta^\frac{\gamma}{\gamma-(p-1)}\,dx+\lambda \int_\Omega u^-\eta^\frac{\gamma}{\gamma-(p-1)}\,dx \\
&\leq \int_\Omega (f+\lambda u^+)\eta^\frac{\gamma}{\gamma-(p-1)}\,dx+C(c_H,\gamma,p,\nu)\int_\Omega |\nabla\eta|^{\frac{\gamma}{\gamma-(p-1)}}\,dx\\
&\leq \omega_N\|f+\lambda u^+\|_{\mathcal{L}^{1,\frac{N}{q}}(B_R)}R^{N-\frac{N}{q}}+C(c_H,\gamma,p,\nu,\omega_N)\frac{R^N}{(R-t)^\frac{\gamma}{\gamma-(p-1)}}\leq K\frac{R^N}{(R-t)^s},
\end{align*}
where $s=\max\left\{\frac{N}{q},\frac{\gamma}{\gamma-(p-1)}\right\}$ and $K$ depends on $N,c_H,\gamma,\nu,p,q$ and $ \|f+\lambda u^+\|_{\mathcal{L}^{1,\frac{N}{q}}(B_R)}$.
\end{proof}
\begin{rem}
The estimate in Lemma \ref{cac} holds for any $\gamma>p-1$, $p>1$, and hence slightly improves the one obtained in \cite{DP} for the case $p=2$, being new in the regime $1<\gamma\leq 2$. This will be important to derive the Liouville theorems for PDIs with nonlinearities having subnatural growth in the gradient in the next section.
\end{rem}
\begin{rem}\label{simp}
The proof of the previous estimate simplifies when \eqref{B1} is replaced with \eqref{B2}, i.e. the term on the right-hand side of the PDI \eqref{eqgen} satisfies $\mathcal{B}_2(x,u,\nabla u)\geq c_H|\nabla u|^\gamma$ using a different integral approach, inspired by \cite{Lions85,RS}. This will be crucial to derive the Liouville results in the next section. Suppose that $u$ is a nonconstant distributional solution to \eqref{eqgen}. We proceed formally integrating the PDI to highlight the main ingredients, although the argument can be made rigorous reasoning as in Remark \ref{weakdiv}. In this way, we get
\begin{equation}\label{eq1}
-\int_{B_r}\mathrm{div}(\mathcal{A}(x,u,\nabla u))\,dx\geq c_H\int_{B_r}|\nabla u|^\gamma\,dx,
\end{equation}
 to conclude after applying the H\"older's inequality
\begin{align}\label{eq2}
-\int_{B_r}\mathrm{div}(\mathcal{A}(x,u,\nabla u))\,dx&\leq \nu\int_{\partial B_r}|\nabla u|^{p-1}\,d\mathcal{H}^{N-1}\\
&\leq \nu\left(\int_{\partial B_r}|\nabla u|^{\gamma}\,d\mathcal{H}^{N-1}\right)^{\frac{p-1}{\gamma}}(\mathrm{area}(\partial B_r))^{\frac{\gamma-(p-1)}{\gamma}}.
\end{align}
We set $\sigma(r)=\int_{B_r}|\nabla u|^\gamma$. Note that since $u$ is non-constant, there exists $\overline{R}$ such that for $r\geq \overline{R}$ it results $\sigma(r)>0$. Moreover, by the co-area formula it follows that \[\sigma'(r)=\int_{\partial B_r}|\nabla u|^\gamma\,d\mathcal{H}^{N-1}.\]
Combining \eqref{eq1} and \eqref{eq2} we conclude
\[
\nu[\sigma'(r)]^{\frac{p-1}{\gamma}}(\mathrm{area}(\partial B_r))^{\frac{\gamma-(p-1)}{\gamma}}\geq c_H\sigma(r)
\]
and so
\[
\sigma'(r)\geq C(\nu,N,\gamma,p,c_H)(\sigma(r))^{\frac{\gamma}{p-1}}r^{-(N-1)\frac{\gamma-(p-1)}{p-1}}, r\geq\overline{R}.
\]
We set 
\[
\alpha:=(N-1)\frac{\gamma-(p-1)}{p-1}
\]
and integrate on $[R,r]$, $R\geq \overline{R}$. When $\alpha< 1$, i.e. $\gamma<\frac{N(p-1)}{N-1}$, we obtain that
\begin{multline}\label{alpha<1}
\frac{p-1}{\gamma-(p-1)}\cdot\frac{1}{(\sigma(R))^{\frac{\gamma-(p-1)}{p-1}}}\geq \frac{p-1}{\gamma-(p-1)}\left(\frac{1}{(\sigma(R))^{\frac{\gamma-(p-1)}{p-1}}}-\frac{1}{(\sigma(r))^{\frac{\gamma-(p-1)}{p-1}}}\right)\\
\geq C(\nu,N,\gamma,p,c_H)\int_R^r t^{-(N-1)\frac{\gamma-(p-1)}{p-1}}=C(\nu,N,\gamma,p,c_H)\frac{(p-1)}{N(p-1)-(N-1)\gamma}[r^{1-\alpha}-R^{1-\alpha}].
\end{multline}
which is essentially the same bound found in Lemma \ref{cac} with $f=\lambda=0$. Remarkably, when $\alpha=1$, i.e. $\gamma=\frac{N(p-1)}{N-1}$, we get
\begin{equation}\label{alpha1}
\frac{p-1}{\gamma-(p-1)}\frac{1}{(\sigma(R))^{\frac{\gamma-(p-1)}{p-1}}}\geq C(\nu,N,\gamma,p,c_H)\log\left(\frac{r}{R}\right).
\end{equation}
This procedure will be further generalized in Lemma \ref{prelman} in the context of Riemannian manifolds.\\
\end{rem}
\begin{rem}\label{weakdiv}
The computations in Remark \ref{simp} can be made rigorous arguing as in \cite{RS,PRSjfa,PSNotes,Chen}. We first recall that $u$ is a distributional supersolution of \eqref{eqgen} if
\[
\int_{\R^N}\mathcal{A}(x,u,\nabla u)\cdot \nabla\varphi\,dx\geq \int_{\R^N}\mathcal{B}(x,u,\nabla u)\varphi\,dx
\]
for all Lipschitz continuous functions $\varphi\geq0$ with compact support. As already outlined in Remark \ref{simp}, the idea is to apply the divergence theorem to the vector field $\mathcal{A}$. To this aim, we define the Lipschitz function $\psi_\eps=\psi_{r,\eps}$ 
\[
\psi_{R,\eps}=
\begin{cases}
1&\text{ if }|x|\leq r,\\
\frac{r+\eps-|x|}{\eps}&\text{ if }r<|x|<r+\eps,\\
0&\text{ if }|x|\geq r+\eps.
\end{cases}
\]
We then take a Lipschitz continuous function $\rho\geq0$ with compact support to be defined later and use $\varphi=\rho\psi_\eps$ as a test function, together with \eqref{A}, to find
\begin{align*}
\int_{\R^N}\mathcal{A}(x,u,\nabla u)\cdot \psi_\eps\nabla\rho\,dx&=\int_{\R^N}\mathcal{A}(x,u,\nabla u)\cdot \nabla(\psi_\eps\rho)\,dx-\int_{\R^N}\mathcal{A}(x,u,\nabla u)\cdot \rho\nabla(\psi_\eps)\,dx\\
&\geq  \int_{\R^N}\mathcal{B}(x,u,\nabla u)\rho\psi_\eps\,dx-\frac{1}{\eps}\int_{B_{r+\eps}/B_r}|\mathcal{A}(x,u,\nabla u)|\rho\,dx\\
&\geq c_H\int_{\R^N}|\nabla u|^\gamma\rho\psi_\eps\,dx-\frac{\nu}{\eps}\int_{B_{r+\eps}/B_r}|\nabla u|^{p-1}\rho\,dx.
\end{align*}
We then choose $\rho$ such that $\rho=1$ on $\overline{B}_{r+\eps}$ and find
\[
\int_{\R^N}\mathcal{A}(x,u,\nabla u)\cdot \psi_\eps\nabla\rho\,dx\geq c_H\int_{B_r}|\nabla u|^\gamma\,dx-\frac{\nu}{\eps}\int_{B_{r+\eps}/B_r}|\nabla u|^{p-1}\,dx.
\]
Therefore, the leftmost side integral of the above inequality vanishes, so that we end up with the inequality 
\[
c_H\int_{B_r}|\nabla u|^\gamma\,dx\leq \frac{\nu}{\eps}\int_{B_{r+\eps}/B_r}|\nabla u|^{p-1}\,dx.
\]
Then, owing to the co-area formula \cite{EG} we get
\begin{multline*}
\lim_{\eps\to0^+}\frac{\nu}{\eps}\int_{B_{r+\eps}/B_r}|\nabla u|^{p-1}\,dx
=\lim_{\eps\to0^+}\frac{\nu}{\eps}\int_{0}^\eps \int_{\partial B_r}|\nabla u|^{p-1}\,d\mathcal{H}^{N-1}\,dr=\nu\int_{\partial B_r}|\nabla u|^{p-1}\,d\mathcal{H}^{N-1}.
\end{multline*}
Letting $\eps\to0^+$ we finally obtain 
\[
c_H\int_{B_r}|\nabla u|^\gamma\,dx\leq \nu\int_{\partial B_r}|\nabla u|^{p-1}\,d\mathcal{H}^{N-1}.
\]
We can then proceed through the H\"older's inequality on the boundary integral as in Remark \ref{simp} to conclude the statement.
\end{rem}
\begin{rem}
One can also prove a similar property when the diffusion operator has the opposite sign, i.e. it is of the form $+\mathrm{div}(\mathcal{A}(x,u,\nabla u))$, and when $|\mathcal{A}(x,u,\nabla u)|\leq h(x)+\nu|\xi|^{p-1}$ with $h\in L^r(\Omega)$ (or in a suitable Morrey class) for some $r>1$ with a (possibly) different H\"older exponent.
\end{rem}
\begin{rem}
A similar condition to \eqref{alpha<1} regarding the existence of solutions has been already pointed out in \cite{HMV99,NP}.
\end{rem}
\section{Some applications of the Caccioppoli-type inequalities}
\subsection{Local and global H\"older regularity for semi-solutions}\label{sec;hol}
We now apply Lemma \ref{cac} to prove the local H\"older continuity of semi-solutions to non-homogeneous PDEs with power-growth nonlinearities and unbounded right-hand side controlled in Morrey spaces. We will actually give a uniform estimate for the H\"older semi-norm on any ball $B\subset \Omega$. 
\begin{thm}\label{localHolder}
Assume  \eqref{A}-\eqref{B1}, $\gamma>p$, $\lambda\geq0$, with $f\in \mathcal{L}^{1,\frac{N}{q}}(\Omega)$ for some $q>\frac{N}{\gamma}$. Let $u\in W^{1,\gamma}_{\mathrm{loc}}(\Omega)$  such that $\lambda u^+\in \mathcal{L}^{1,\frac{N}{q}}(\Omega)$ which satisfies, in the sense of distributions, the inequality  \eqref{eqgen}. Then $u$ is locally H\"older continuous and satisfies for every ball $B\subset\Omega$ the interior bound
\[
|u(x)-u(y)|\leq K|x-y|^\alpha\ ,\forall x,y\in B\ ,
\]
where
\[
\alpha=\min\left\{1-\frac{N}{q\gamma},\frac{\gamma-p}{\gamma-(p-1)}\right\},
\]
and $K$ depends on $p,q,\gamma,N,\nu,\Omega,\|f\|_{\mathcal{L}^{1,\frac{N}{q}}(\Omega)}$ and $\lambda u^+\in \mathcal{L}^{1,\frac{N}{q}}(\Omega)$.
\end{thm}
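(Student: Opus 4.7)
The plan is to turn the Caccioppoli-type bound of Lemma \ref{cac} into a Morrey/Campanato growth estimate for $|\nabla u|^\gamma$ on every ball centered in $\Omega$, and then invoke Morrey's Dirichlet growth theorem to deduce Hölder continuity. Concretely, given a point $x_0\in\Omega$ and a radius $r>0$ with $B_{2r}(x_0)\subset\Omega$, I would apply Lemma \ref{cac} to the concentric pair $B_r(x_0)\subset B_{2r}(x_0)$ (so that $R-t=r$, $R=2r$) to obtain
\[
\int_{B_r(x_0)}|\nabla u|^\gamma\,dx \leq K\,\frac{(2r)^N}{r^s} = K'\, r^{N-s}, \qquad s=\max\Big\{\frac{N}{q},\,\frac{\gamma}{\gamma-(p-1)}\Big\},
\]
with $K'$ independent of $x_0$ and $r$ (once $B_{2r}(x_0)\subset\Omega$), since $\|f+\lambda u^+\|_{\mathcal{L}^{1,N/q}(B_{2r}(x_0))}$ is controlled by the corresponding norm on $\Omega$.

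Next I would recall the classical Morrey/Campanato characterization: if $v\in W^{1,\gamma}_{\mathrm{loc}}(\Omega)$ satisfies $\int_{B_r(x_0)}|\nabla v|^\gamma\,dx\leq M r^{N-\gamma(1-\alpha)}$ for every $x_0$ in an open set, every small $r$, and some $\alpha\in(0,1)$, then $v$ is locally $\alpha$-Hölder continuous, with a quantitative control of its Hölder seminorm by $M^{1/\gamma}$. Matching the exponent in the estimate above to $N-\gamma(1-\alpha)$ gives $\alpha=1-s/\gamma$, which unfolds exactly to
\[
\alpha = \min\Big\{1-\frac{N}{q\gamma},\,\frac{\gamma-p}{\gamma-(p-1)}\Big\}.
\]
Positivity of $\alpha$ uses precisely the two standing assumptions: $q>N/\gamma$ for the first term and $\gamma>p$ for the second.

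Finally, to promote this to the uniform bound on an arbitrary ball $B\subset\Omega$ stated in the theorem, I would note that the radii available for the Caccioppoli estimate depend only on $\mathrm{dist}(B,\partial\Omega)$, and that $K'$ above can be taken uniform in $x_0\in B$. For two points $x,y\in B$ I would then use Campanato's argument on the ball $B_{2|x-y|}$ around their midpoint (resorting to a short chain of balls if $|x-y|$ exceeds the admissible radius), absorbing the diameter of $\Omega$ into the constant. The only delicate point I anticipate is the bookkeeping when $B$ touches $\partial\Omega$, which is handled by shrinking the admissible radius and absorbing the ensuing geometric constants into $K$; no new analytic ingredient beyond Lemma \ref{cac} and Morrey's theorem is required, and this is why the proof ultimately reduces to a clean Morrey-type embedding once the right Dirichlet growth has been produced.
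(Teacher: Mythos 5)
Your proposal is correct and follows essentially the same route as the paper: apply Lemma \ref{cac} on the pair $B_r\subset B_{2r}$ to get the Dirichlet-growth bound $\int_{B_r}|\nabla u|^\gamma\le K' r^{N-s}$, then invoke Morrey's growth theorem to read off $\alpha=1-s/\gamma$, and finally chain/absorb constants to get uniformity on every $B\subset\Omega$. The only cosmetic difference is that the paper first passes to $\int_{B_r}|\nabla u|\,dx\le\widetilde K r^{N-s/\gamma}$ by H\"older and applies the $L^1$-version of Morrey's lemma (GT Theorem~7.19), whereas you use the $L^\gamma$-version directly — the same result by the same matching of exponents.
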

\begin{proof}
\textit{Step 1: Local H\"older regularity}. Let $x_0\in\Omega$ and $B_r=B_r(x_0)$ be a ball such that the twice bigger ball $B_{2r}(x_0)\subset\Omega$. We apply Lemma \ref{cac} and obtain the estimate
\[
\int_{B_r}|\nabla u|^\gamma\,dx\leq Kr^{N-s},
\]
where $s=\max\left\{\frac{N}{q},\frac{\gamma}{\gamma-(p-1)}\right\}$ and $K$ depends on $\gamma,p,N,\nu,q,c_H$ and $\|f+\lambda u^+\|_{\mathcal{L}^{1,\frac{N}{q}}(\Omega)}$. By the H\"older's inequality we can write
\[
\int_{B_r}|\nabla u|\,dx\leq \left(\int_{B_r}|\nabla u|^\gamma\,dx\right)^\frac1\gamma(\mathrm{Vol}(B_r))^{1-\frac1\gamma},
\]
and hence deduce that for a possibly different constant $\widetilde{K}$ we have
\[
\int_{B_r}|\nabla u|\,dx\leq \widetilde{K}r^{N-\frac{s}{\gamma}}.
\]
If $B_R$ is any ball such that $B_{2R}\subset\Omega$, the same property continues to hold for any smaller ball $B_r\subset B_R$. We apply \cite[Theorem 7.19]{GT} and conclude that $u$ is H\"older continuous in $B_R$ with the following explicit H\"older exponent
\[
\alpha=1-\frac{s}{\gamma}=\min\left\{1-\frac{N}{q\gamma},1-\frac{1}{\gamma-(p-1)}\right\}=\min\left\{1-\frac{N}{q\gamma},\frac{\gamma-p}{\gamma-(p-1)}\right\}
\]
and
\[
|u(x)-u(y)|\leq K|x-y|^{\alpha}\ ,\forall x,y\in B_R\ .
\]
In particular, the last estimate holds for any couple of points $x,y$ belonging to some ball $B_R$ such that $B_{2R}\subset\Omega$.\\

\textit{Step 2: Uniform H\"older estimates}. It is enough to argue through the same path outlined in Step 2 of \cite[Theorem 3.1]{DP}.
\end{proof}
The previous result proves that $u$ is uniformly locally H\"older continuous in a domain $\Omega\subset\R^N$. To prove the global H\"older regularity one needs some extra smoothness assumptions on the boundary of $\Omega$. More precisely, we say that an open bounded subset $\Omega\subset\R^N$ has Lipschitz boundary if in a neighborhood of each $x_0\in\partial\Omega$ the domain $\Omega$ can be represented  as the subgraph of a Lipschitz function of $(N-1)$-variables.\\
Still, we need that $\Omega$ satisfies the uniform interior sphere condition with radius $r>0$, i.e. for every $x_0\in\partial\Omega$ there exists a unit vector $\zeta(x_0)$ such that $B_r(x_0+r\zeta(x_0))\subset\Omega$. Under these extra coupled conditions on the boundary of the domain, one has the following result taken from \cite[Lemma 2.6]{CDLP}, which is recalled here below for reader's convenience.
\begin{lemma}\label{ext}
Let $\Omega\subset\R^N$ be an open bounded domain with Lipschitz boundary and satisfying the uniform interior sphere condition. Let now $u:\Omega\to\R$ be a continuous function such that there exist $K>0$ and $\alpha\in(0,1)$ such that
\[
|u(x)-u(y)|\leq K|x-y|^\alpha\ ,\forall x,y\in B\ ,B\subset\Omega\ .
\]
Then, $u$ extends up to $\partial\Omega$ as a function verifying
\[
|u(x)-u(y)|\leq M|x-y|^\alpha\ ,\forall x,y\in\overline{\Omega}\ ,
\]
where $M\geq K$ depends on $\alpha,K$ and on $\partial\Omega$.
\end{lemma}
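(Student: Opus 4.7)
\smallskip
\textbf{Proof proposal.} The plan is to first define $u$ on $\partial\Omega$ as a radial limit using the uniform interior sphere condition, and then to obtain the global H\"older bound by chaining the assumed interior estimate across the boundary. The Lipschitz regularity of $\partial\Omega$ enters only through a local chord-arc property that controls the length of a suitable path connecting two points of $\overline{\Omega}$ in terms of their Euclidean distance.

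\emph{Extension to $\partial\Omega$.} Fix $x_0\in\partial\Omega$ with associated unit vector $\zeta(x_0)$, so that $B_r(x_0+r\zeta(x_0))\subset\Omega$. The radial sequence $y_n:=x_0+(r/n)\zeta(x_0)$ lies in this ball and the hypothesis yields $|u(y_n)-u(y_m)|\leq K|y_n-y_m|^\alpha$, so $\{u(y_n)\}$ is Cauchy. Define $u(x_0)$ to be its limit. For any other $\tilde y\in\Omega$ close to $x_0$, pick a radial point $z:=x_0+\rho\zeta(x_0)$ with $\rho\sim|\tilde y-x_0|$; the interior sphere property together with the Lipschitz regularity of $\partial\Omega$ forces $\tilde y$ and $z$ to sit in a common interior ball of radius comparable to $|\tilde y-x_0|$, whence $|u(\tilde y)-u(z)|\leq K|\tilde y-z|^\alpha$. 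Letting $\tilde y\to x_0$ shows that the limit is independent of the approaching sequence and that the extended $u$ is continuous on $\overline{\Omega}$.

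\emph{Global H\"older bound.} Given $x,y\in\overline{\Omega}$, distinguish two regimes. If $|x-y|\geq r/2$, a uniform chain of $O(\mathrm{diam}(\Omega)/r)$ interior balls yields the bound with a constant depending only on $\mathrm{diam}(\Omega),r,K,\alpha$. If $|x-y|<r/2$, let $\overline x,\overline y\in\partial\Omega$ be nearest boundary points (setting $\overline x=x$ when $x$ is already an interior point at distance $\gtrsim |x-y|$ from $\partial\Omega$, similarly for $y$) and define the pushed-in auxiliary points
\[
\tilde x:=\overline x+c|x-y|\zeta(\overline x)\in\Omega,\qquad \tilde y:=\overline y+c|x-y|\zeta(\overline y)\in\Omega,
\]
with $c>0$ small. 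The interior sphere condition combined with the Lipschitz character of $\partial\Omega$ forces $\tilde x$ and $\tilde y$ to lie in a common ball $B'\subset\Omega$ of radius comparable to $|x-y|$. The interior hypothesis then gives $|u(\tilde x)-u(\tilde y)|\leq K|\tilde x-\tilde y|^\alpha\leq C|x-y|^\alpha$, while the extension step above yields $|u(x)-u(\tilde x)|,|u(y)-u(\tilde y)|\leq C|x-y|^\alpha$. The triangle inequality closes the estimate.

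\emph{Main obstacle.} The delicate step is the quantitative construction of $\tilde x,\tilde y$ and of the common ball $B'\subset\Omega$ with radius scaling linearly in $|x-y|$, uniformly in the position of the pair on $\partial\Omega$. The uniform interior sphere condition, providing a lower bound $r$ on the radii of supporting interior balls independent of the boundary point, is tailored precisely for this purpose. The Lipschitz regularity of $\partial\Omega$ guarantees continuity of the inner normal $\zeta$ on a scale compatible with $r$, so that the segment from $\tilde x$ to $\tilde y$ does not escape $\Omega$ and no uncontrolled accumulation of constants occurs as either point approaches the boundary.
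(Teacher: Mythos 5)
The paper does not prove this lemma; it is cited verbatim from \cite[Lemma 2.6]{CDLP} "for reader's convenience," so there is no internal proof to compare against. Evaluating your argument on its own, the overall structure (radial extension along the inner normal, then chaining the interior estimate across the boundary) is the right one, but there is a genuine gap in the near-boundary step.

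The gap is in the assertion that "the interior sphere condition combined with the Lipschitz character of $\partial\Omega$ forces $\tilde x$ and $\tilde y$ to lie in a common ball $B'\subset\Omega$ of radius comparable to $|x-y|$," justified in your final paragraph by the claim that the Lipschitz regularity "guarantees continuity of the inner normal $\zeta$." That last claim is false under the stated hypotheses: a bounded Lipschitz domain with uniform interior sphere condition need not have $C^1$ boundary. For instance, $\Omega=B_R(0)\setminus[-1,1]^N$ (a large ball minus a cube) satisfies both hypotheses for $R$ large, yet the inner normal is discontinuous across the edges of the cube, jumping by a fixed angle. In such a domain, if $\overline x,\overline y$ straddle an edge with $|\overline x-\overline y|=d$ and are pushed in by $Cd$ along their respective normals, the resulting points have depth of order $Cd$ but mutual distance also of order $Cd$, with a ratio that stays bounded away from $1$ no matter how large $C$ is; a single ball of radius $\sim d$ (or $\sim Cd$) contained in $\Omega$ and containing both points is not guaranteed by the computation. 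One can check that the construction marginally survives for orthogonal normals, but the argument as written does not rule out worse configurations, and the supporting claim (normal continuity) that would rule them out is simply not implied by the hypotheses.

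The standard repair, and the argument actually needed, is to replace the single common ball by a finite \emph{chain} of overlapping interior balls $B_1,\dots,B_k\subset\Omega$ with $\tilde x\in B_1$, $\tilde y\in B_k$, $B_i\cap B_{i+1}\ne\emptyset$, with $k$ uniformly bounded and $\sum_i \mathrm{diam}(B_i)\lesssim |x-y|$. That Lipschitz domains satisfy this corkscrew/Harnack-chain property is classical, and the interior sphere condition is what guarantees that the chain can be started from the boundary with balls of radius $\gtrsim|x-y|$. Applying the hypothesis ball-by-ball and summing a geometric-type series then yields $|u(\tilde x)-u(\tilde y)|\leq C K |x-y|^\alpha$ with $C$ depending only on the chain data (hence on $\partial\Omega$). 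The same remark applies to the extension step, where a nearby interior point $\tilde y$ very close to $\partial\Omega$ and the radial auxiliary point $z$ need not lie in a common ball of radius $\sim|\tilde y-x_0|$ and should similarly be connected by a short chain. With these modifications your proof is complete; without them, the "common ball" step is an unsupported assertion.
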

We can then conclude the section with the proof of Theorem \ref{main2}.
\begin{proof}[Proof of Theorem \ref{main2}] The result now follows from Theorem \ref{localHolder} via Lemma \ref{ext}.

\end{proof}

\begin{rem}
Theorem \ref{main1} extends a result for semi-solutions to fully nonlinear equations with bounded right-hand side in the viscosity framework obtained in \cite[Theorem 2.11]{CDLP}, together with the one for the case $p=2$ obtained by A. Dall'Aglio and A. Porretta in \cite{DP} for subsolutions to equations with coercive Hamiltonians and $L^q$ source terms. We emphasize that the H\"older regularity we obtain here goes beyond the (continuous) viscosity solutions' framework, and in particular we recover the same (optimal) exponent found in \cite{CDLP} for $f\in L^\infty$. 
\end{rem}
\begin{rem}
The level of H\"older regularity obtained in Theorem \ref{main1} is the starting point to derive more general quantitative (local) Calder\'on-Zygmund estimates for solutions to these classes of PDEs, as recently started in \cite{CV} for $p=2$. This will be the matter of a future research.
\end{rem}

\subsection{Sharpness of the H\"older regularity and further comments}\label{sharp}
We first observe that the exponents found in the H\"older regularity proved in Theorem \ref{main1} are sharp. To do this, we first recall that if $u$ is radial, i.e. $u(x)=V(|x|)\equiv V(r)$ for some smooth function $V$, if $x$ is such that $V'(|x|)\neq 0$ we have
\begin{equation}\label{radial}
-\Delta_pu(x)=-|V'(r)|^{p-2}\left[(p-1)V''(r)+\frac{N-1}{r}V'(r)\right].
\end{equation}
The result in Theorem \ref{main1} shows that when $q>\frac{N(\gamma-(p-1))}{\gamma}$ the supersolutions to the PDI \[-\Delta_p u\geq -\lambda u+|\nabla u|^\gamma-f(x)\] belong to $C^\alpha$ with $\alpha=\frac{\gamma-p}{\gamma-(p-1)}$. In this case, an example of the optimality concerning the H\"older exponent $\alpha=\frac{\gamma-p}{\gamma-(p-1)}$ can be obtained even in the class of (distributional) solutions to the equation
\[
-\Delta_p u=|\nabla u|^\gamma\text{ in }\Omega\ ,u\in W_0^{1,\gamma}(\Omega)\ ,
\]
i.e. for the equation with vanishing source term $f\equiv0$. It is sufficient to take $\Omega=B_1(0)$ and the function $u(x)=c(|x|^{\frac{\gamma-p}{\gamma-(p-1)}}-1)$ for $c=-\frac{\gamma-(p-1)}{\gamma-p}\left(\frac{(N-1)\gamma-N(p-1)}{\gamma-(p-1)}\right)^{\frac{1}{\gamma-(p-1)}}$, $\gamma>p$ and $1<p<N$.\\
We observe that, though the method does not require any restriction on the order $p>1$, the present analysis could have been restricted to the case $1<p< N$, since the weak solutions belonging to $W^{1,\gamma}$ with $\gamma>p$ and $p>N$ would have been automatically (locally) H\"older continuous by the Sobolev embeddings. This is completely in line with the way of proving the local H\"older regularity in the case of $p$-harmonic functions (i.e. for weak solutions to $-\Delta_pu=0$). Indeed, the case $p>N$ is in general simpler than the cases $p=N$ and $p<N$, which require finer arguments based on the Widman filling-the-hole technique and the Moser iteration respectively, cf \cite{BensoussanBook}. We emphasize once more that in the case $1<p\leq N$, our result covers the case of semi-solutions and not only of solutions to such quasi-linear equations.\\
Moreover, even the order of the H\"older class $\alpha=1-\frac{N}{q\gamma}$ cannot be improved when $\frac{N}{\gamma}<q<\frac{N(\gamma-(p-1))}{\gamma}$, as shown in e.g. \cite[Remark 3.2]{DP} for the case $p=2$. Finally, we stress again that when $\gamma>p$, the gradient term dominates the diffusion at small scales, and hence the equation can be regarded as
\[
|\nabla u|^\gamma\leq -\Delta_pu+\lambda u-f
\]
so that the diffusion plays no role in the derivation of the H\"older regularity, as it can be seen by inspection throughout the proof of Lemma \ref{cac}. This result is thus consistent with the Lipschitz regularity for subsolutions of the first-order equation
\[
|\nabla u|^\gamma\leq \lambda u-f,
\]
as widely discussed in \cite{B}. We conclude by recalling that the above example shows also that the uniqueness does not hold for these weak solutions, and hence this latter property really depends on the formulation of the problem, cf \cite[Remark 3.2]{DP}.
\subsection{Liouville theorems for homogeneous problems}\label{sec;lio}
The first byproduct of our Caccioppoli-type bounds obtained in Remark \ref{simp} are the Liouville-type theorems for supersolutions to nonlinear homogeneous PDIs. Although the ideas behind the proofs contained in this section are not completely new, one remarkable observation is that the derivation of the Liouville property for PDIs with gradient terms does not need any a priori one-side bound on the solution, and it holds for merely distributional supersolutions to general PDI with measurable ingredients. This last feature is in line with the corresponding Liouville properties for solutions to the model equation $-\Delta_p u+|Du|^\gamma=0$ obtained via the Bernstein method, cf \cite{PS,Lions85} for $p=2$ and \cite{VeronJFA} for $p>1$.\\
In addition, we wish to emphasize that in the derivation of this kind of Liouville properties, one usually needs to use different methods to handle the subcritical case for $\gamma$ (i.e. $p-1<\gamma<\frac{N(p-1)}{N-1}$) and the critical regime ($\gamma=\frac{N(p-1)}{N-1}$). Here, our method of proof seems shorter compared to the existing ones and unify the treatment of both regimes. To this aim, we consider the (distributional) inequality
\begin{equation}\label{mainlio}
-\mathrm{div}(\mathcal{A}(x,u,\nabla u))\geq \mathcal{B}_2(x,u,\nabla u)\text{ in }\R^N
\end{equation}
with $\mathcal{B}_2$ satisfying \eqref{B2}. For ease of presentation we decided to reverse the order of the proofs of Theorem \ref{main3} and Corollary \ref{main2}. Thus, we first discuss how to derive Corollary \ref{main2} in the Euclidean setting, then state some comments on the result. This will serve as a guideline to treat the more general case of Riemannian manifolds in Section \ref{sec;riem}.
\begin{proof}[Proof of Corollary \ref{main2}]
Suppose by contradiction that $u$ were not constant on the ball $B_{\widetilde{R}}$ for some $\widetilde{R}>0$. The required contradiction can be obtained exploiting \eqref{alpha<1} in Remark \ref{simp} sending $r\to\infty$ or Lemma \ref{cac} applied with $f=\lambda=0$ and $t=0$, after letting $R\to \infty$, using that $\gamma<\frac{N(p-1)}{N-1}$. The Liouville property in the critical case $\gamma=\frac{N(p-1)}{N-1}$ readily follows through the same path using \eqref{alpha1} after sending $r\to\infty$.
\end{proof}
\begin{rem}
Specializing Corollary \ref{main2} to some well-known operators, we conclude that any distributional supersolution to
\[
-\Delta_pu\geq c_H|\nabla u|^\gamma\text{ in }\R^N
\]
or
\[
-\mathrm{div}\left(|\nabla u|^{k-2}\frac{\nabla u}{\sqrt{1+|\nabla u|^k}}\right),\ k\geq2,\ p=\frac{k}{2}
\]
must be constant provided that $p-1<\gamma\leq \frac{N(p-1)}{N-1}$, while any distributional supersolution to
\[
-\Delta u\geq c_H|\nabla u|^\gamma\text{ in }\R^N
\]
or
\[
-\mathrm{div}\left(\frac{\nabla u}{\sqrt{1+|\nabla u|^2}}\right)\geq c_H|\nabla u|^\gamma\text{ in }\R^N
\]
satisfies the Liouville property provided that $1<\gamma\leq \frac{N}{N-1}$. 
\end{rem}

\begin{rem}\label{p>N}
Throughout this section we restricted to consider the case $1<p<N$. Indeed, if one considers the inequality $-\Delta_pu\geq |\nabla u|^\gamma$ in $\R^N$, we have that $u$ solves also $-\Delta_pu\geq 0$ in $\R^N$, and hence the Liouville property would have been trivial when $p\geq N$ through classical results, see e.g. \cite{SZ} and the next Section \ref{alt}.
\end{rem}
\begin{rem}\label{ref}
The results in Corollary \ref{main2} have been deeply studied in the literature. When the PDI is driven by the Laplacian the result has been proved in e.g. \cite{CaristiMitidieri} for classical solutions, i.e. when $u\in C^2(\R^N)$, satisfying the inequality pointwisely, exploiting that the spherical mean preserves the PDI, reducing then the analysis to an ODE. Alternative test functions method have been extensively used in various papers. For instance, R. Filippucci proved in \cite[Corollary 1]{FilippucciJDE2011} the Liouville property for nonnegative distributional solutions to $-\Delta_p u\geq u^m|\nabla u|^\gamma$ in $\R^N$ under the following assumptions
\[
0<m\leq \frac{N(p-1)}{N-p}-\gamma \frac{N-1}{N-p}\ ,1<p<N\ ,m+\gamma>p-1\ ,
\]
and therefore the pure gradient nonlinearity case $m=0$ is excluded from that result. When $p=2$ and $(N-1)\gamma\leq N(p-1)$, $\gamma>0$, the result was proved through a result similar to Lemma \ref{cac} in \cite{DM} for distributional supersolutions. Different (recent) proofs have been obtained in \cite[Corollary 3]{QuaasDCDS} and \cite[Theorem 2.1]{VeronDuke}. We emphasize once more that most of these proofs require to distinguish the treatment of the subcritical case (e.g. when $\gamma<\frac{N(p-1)}{N-1}$) from the critical one (namely $\gamma=\frac{N(p-1)}{N-1}$). More recent analyses for PDIs on noncompact complete manifolds, without curvature conditions and assuming suitable volume growths, have been carried out in \cite{SunCpaa,Sun}. We remark in passing that our method differs from the ones proposed in the literature.
\end{rem}
\subsubsection{Further remarks and alternative approaches}\label{alt}
We conclude the section by observing that a different proof of the Liouville property given in e.g. \cite[Corollary 1]{FilippucciJDE2011}, \cite[Corollary 3]{QuaasDCDS} when a one-side bound on the solution is in force can be derived as follows. This method has been inspired by earlier results on the subject appeared in the context of Riemannian manifolds in \cite{Chen} and \cite{RS,PRSmem} (see also the lecture notes \cite{PSNotes}), although here we propose a variation of those schemes in view of the (superlinear) character of the gradient term. The basic idea relies on multiplying the inequality by $e^{-u}$, use the chain rule and take the integration of the resulting inequality through the divergence theorem. \\
First, assume by contradiction that $u$ is not constant on the ball $B_{R_0}$ for some $R_0>1$. We proceed by considering the vector field $X=e^{-u}(-|\nabla u|^{p-2}\nabla u)$ and integrating its divergence. As before, the argument can be made more rigorous arguing as in Remark \ref{weakdiv} or \cite{PRSjfa} through a test function argument. On one hand, expanding the divergence, after plugging back the equation $-\Delta_p u\geq c_H|\nabla u|^\gamma$ we deduce
\begin{multline}\label{inequpos}
\int_{B_r}\mathrm{div}(-e^{-u}|\nabla u|^{p-2}\nabla u)\,dx=\int_{B_r}e^{-u}|\nabla u|^p\,dx+\int_{B_r}e^{-u}(-\Delta_p u)\,dx\\
\geq \int_{B_r}e^{-u}(-\Delta_p u)\,dx\geq c_H\int_{B_r}e^{-u}|\nabla u|^\gamma\,dx.
\end{multline}
Applying the divergence theorem we get
\[
\int_{B_r}\mathrm{div}(X)\,dx\leq \int_{\partial B_r}e^{-u}|\nabla u|^{p-1}\,d\mathcal{H}^{N-1}
\]
so that we end up with the inequality
\[
\int_{\partial B_r}e^{-u}|\nabla u|^{p-1}\,d\mathcal{H}^{N-1}\geq c_H\int_{B_r}e^{-u}|\nabla u|^\gamma\,dx.
\]
We then use the H\"older's inequality with the conjugate pairs $(\frac{\gamma}{p-1},\frac{\gamma}{\gamma-(p-1)})$, together with $u\geq0$, to conclude
\begin{align*}
c_H\int_{B_r}e^{-u}|\nabla u|^\gamma\,dx&\leq \int_{\partial B_r}e^{-u}|\nabla u|^{p-1}\,d\mathcal{H}^{N-1}\\
&\leq\left(\int_{\partial B_r}e^{-u}|\nabla u|^\gamma\,d\mathcal{H}^{N-1} \right)^\frac{p-1}{\gamma}\left(\int_{\partial B_r}e^{-u}\,d\mathcal{H}^{N-1}\right)^\frac{\gamma-(p-1)}{\gamma}\\
&\leq \left(\int_{\partial B_r}e^{-u}|\nabla u|^\gamma\,d\mathcal{H}^{N-1} \right)^\frac{p-1}{\gamma}\left(\int_{\partial B_r}\,d\mathcal{H}^{N-1}\right)^\frac{\gamma-(p-1)}{\gamma}.
\end{align*}
We set 
\[
\mu(r)=\int_{B_r}e^{-u}|\nabla u|^\gamma\,dx.
\]
Applying the co-area formula \cite{EG} we deduce
\[
\mu'(r)=\int_{\partial B_r}e^{-u}|\nabla u|^\gamma\,d\mathcal{H}^{N-1},
\]
therefore concluding
\[
\mu'(r)[\mu(r)]^{-\frac{\gamma}{p-1}}\geq c_H\left(\int_{\partial B_r}\,d\mathcal{H}^{N-1}\right)^{-\frac{\gamma-(p-1)}{p-1}}.
\]
We then integrate on $[R,r]$ and get 
\begin{multline*}
\frac{p-1}{\gamma-(p-1)}\left[\frac{1}{(\mu(R))^{\frac{\gamma-(p-1)}{p-1}}}-\frac{1}{(\mu(r))^{\frac{\gamma-(p-1)}{p-1}}}\right]\geq \int_R^r\frac{c_H}{(\text{area}(\partial B_t))^{\frac{\gamma-(p-1)}{p-1}}}\,dt\\=c_HN\omega_N\int_R^r\frac{1}{t^{(N-1)\frac{\gamma-(p-1)}{p-1}}}\,dt=\frac{c_HN\omega_N(p-1)}{N(p-1)-(N-1)\gamma}\left[r^{\frac{N(p-1)-(N-1)\gamma}{p-1}}-R^{\frac{N(p-1)-(N-1)\gamma}{p-1}}\right].
\end{multline*}
Therefore
\[
\frac{p-1}{\gamma-(p-1)}\frac{1}{(\mu(R))^{\frac{\gamma-(p-1)}{p-1}}}\geq\frac{c_HN\omega_N(p-1)}{N(p-1)-(N-1)\gamma}\left[r^{\frac{N(p-1)-(N-1)\gamma}{p-1}}-R^{\frac{N(p-1)-(N-1)\gamma}{p-1}}\right].
\]
We then let $r\to\infty$ and conclude $\mu(R)=0$, which then contradicts our initial hypothesis implying that $u$ must be constant a.e. on $\R^N$.\\
When $\gamma=\frac{N(p-1)}{N-1}$ we instead get the inequality
\[
\frac{p-1}{\gamma-(p-1)}\frac{1}{(\mu(R))^{\frac{\gamma-(p-1)}{p-1}}}\geq c_HN\omega_N \log\left(\frac{r}{R}\right),
\]
which leads again to a contradiction. Note that such an approach involving a vector field weighted with an exponential term allows to recover most of the well-known properties for linear and nonlinear problems without perturbative eikonal terms. For instance, one can deduce that every nonnegative superharmonic function in $\R^2$ must be constant, or the Liouville property for nonnegative classical solutions to $-\mathrm{div}(\nabla u/\sqrt{1+|\nabla u|^2})\geq0$ in $\R^2$, or even that any solution to the inequality $-\Delta_p u\geq0$ in $\R^N$ is constant provided $p\geq N$. This can be done using inequality \eqref{inequpos} and keeping the term $\int_{B_r}e^{-u}|\nabla u|^p\,dx$ instead of $\int_{B_r}e^{-u}|\nabla u|^\gamma\,dx$. Actually, this approach can be applied even to deduce the Liouville property for nonnegative $p$-superharmonic functions on general noncompact Riemannian manifolds under the area-growth condition
\[
\int^{+\infty}\frac{1}{(\mathrm{area}(\partial B_\rho))^{\frac{1}{p-1}}}\,d\rho=+\infty,
\]
see \cite{PSNotes} for further details. We recall that such results have been deduced through other different methods: maximum principle methods have been exploited in \cite{PRS,BC,BG}, capacity methods have been used in \cite{HMV99}, and, finally, probabilistic approaches, through the recurrence properties of the Brownian motion, have been thoroughly discussed in \cite{GriAMS}.\\
\begin{rem}
The same integral argument can be repeated to show that any \textit{distributional solution} to
\[
-\Delta_pu+ c_H|\nabla u|^\gamma=0\text{ in }\R^N,\gamma>p-1\ ,
\]
must be constant when $p-1<\gamma\leq \frac{N(p-1)}{N-1}$. On the other hand, if $\gamma>\frac{N(p-1)}{N-1}$ there exist global non-constant solutions belonging to $W^{1,\gamma}_{\text{loc}}(\R^N)$. Indeed, when $\frac{N(p-1)}{N-1}<\gamma<p$ the function $u(x)=-c|x|^{\frac{p-\gamma}{\gamma-(p-1)}}$ for an appropriate $c>0$ belongs to $W^{1,r}_{\mathrm{loc}}(\R^N)$ for $r<N[\gamma-(p-1)]$ and solves the previous equation in distributional sense. When $\gamma>p$ it is sufficient to consider $u(x)=c|x|^{\frac{\gamma-p}{\gamma-(p-1)}}$ for a suitable constant $c>0$. When $\gamma=p$, one can construct a counterexample through logarithmic-type functions. Hence, the results proved in \cite[Theorem A]{VeronJFA} entail that such distributional solutions cannot be more regular (i.e. of class $C^1$). Actually, such a Liouville theorem has been proved for less regular solutions (continuous, in the viscosity sense) in \cite[Theorem 3.1]{BDL} for a wider class of fully nonlinear second order equations avoiding any differentiation of the PDE as a consequence of gradient bounds via the Ishii-Lions method initiated in \cite{CDLP}. Therefore, this integral argument shows once more that such distributional solutions cannot be more regular (neither continuous). Therefore, as remarked in Section \ref{sharp} for the uniqueness of solutions, the formulation of the problem is really important even for the range of the validity of the Liouville theorem.
\end{rem}
\begin{rem}
The Liouville results of the previous sections transfer to some non-divergent and fully nonlinear elliptic equations. For instance, if the leading operator in divergence form in \eqref{eqgen} is replaced with 
\[
\mathcal{P}^-_{\lambda,\Lambda}(M)=\inf\{-\mathrm{Tr}(AM):\lambda I_N\leq A\leq\Lambda I_N\ ,0<\lambda\leq\Lambda\}\ ,
\]
then
\[
\mathcal{P}^-_{\lambda,\Lambda}(D^2u)\geq c_H|\nabla u|^\gamma\text{ in }\R^N\implies -\Lambda \Delta u\geq c_H|\nabla u|^\gamma\text{ in }\R^N\ ,
\]
and hence the Liouville property follows from Theorem \ref{main2} under the same range for $\gamma$. We expect a similar property would hold for supersolutions to equations driven by linear non-divergent operators assuming suitable asymptotic conditions at infinity on the diffusion coefficients. Still, one expects the Liouville property to hold even for the maximal operator $\mathcal{M}^+_{\lambda,\Lambda}$ (which is defined replacing the $\inf$ with the $\sup$ over the same class of matrices) and thus to second order fully nonlinear uniformly elliptic operators, as partly analyzed in \cite{CGsurvey}. These properties in the non-divergence setting will be the matter of a future research, since these integral methods do not apply to such class of PDIs.
\end{rem}
\begin{rem}
The results related to the Liouville property in Theorem \ref{main2} are sharp with respect to the parameter range $1<\gamma\leq\frac{N(p-1)}{N-1}$, as shown in e.g. \cite{FilippucciJDE2011,DM}. Indeed, the function $u(x)=V(|x|)=c(1+|x|^2)^{-\frac{p-\gamma}{2(\gamma-(p-1))}}$ for a suitable $c>0$ is a bounded non-constant solution to the inequality
\[
-\Delta_pu\geq c_H|\nabla u|^\gamma\text{ in }\R^N
\]
when $\gamma>\frac{N(p-1)}{N-1}$. To see this, set $\delta:=\frac{p-\gamma}{\gamma-(p-1)}$. One can use \eqref{radial} to show that
\begin{align*}
-&\Delta_p u=-|c|^{p-2}\left|\frac{\gamma-p}{\gamma-(p-1)}\right|^{p-2}(1+|x|^2)^{-(\frac{\delta}{2}+1)(p-2)}|x|^{p-2}\\
&\cdot c\left[(p-1)\frac{\gamma-p}{\gamma-(p-1)}(1+|x|^2)^{-(\frac{\delta}{2}+2)}\left(\frac{|x|^2}{\gamma-(p-1)}-1\right)-(N-1)\frac{\gamma-p}{\gamma-(p-1)}(1+|x|^2)^{-(\frac{\delta}{2}+1)}\right]\\
&\geq |c|^{p-2}c\frac{p-\gamma}{\gamma-(p-1)}\left|\frac{\gamma-p}{\gamma-(p-1)}\right|^{p-2}\left(N-1-\frac{p-1}{\gamma-(p-1)}\right)(1+|x|^2)^{-(\frac{\delta}{2}+1)(p-1)}|x|^{p-2}\\
&\geq |c|^{p-2}c\frac{p-\gamma}{\gamma-(p-1)}\left|\frac{\gamma-p}{\gamma-(p-1)}\right|^{p-2}\left(N-1-\frac{p-1}{\gamma-(p-1)}\right)(1+|x|^2)^{-(\frac{\delta}{2}+1)\gamma}|x|^{\gamma}.
\end{align*}
The last inequality can be made greater than or equal to 
\[
c_H|\nabla u|^\gamma=c_H|\nabla V(|x|)|^\gamma=c_H|c|^\gamma\left|\frac{\gamma-p}{\gamma-(p-1)}\right|^{\gamma}(1+|x|^2)^{-(\frac{\delta}{2}+1)\gamma}|x|^{\gamma}
\]
 provided that $c>0$ is a suitable small constant and $\gamma>\frac{N(p-1)}{N-1}$.
\end{rem}

\section{Generalizations to problems arising in sub-Riemannian and Riemannian geometry}
\subsection{Problems modeled on H\"ormander's vector fields}\label{sec;sub}
In this section we briefly discuss some possible generalizations to problems modeled on a frame of vector fields of $\R^N$. Consider a family of smooth, say $C^\infty$, vector fields $\X=\{X_1,...,X_m\}$, $m\leq N$, generating a Carnot groups, and thus satisfying the H\"ormander's rank condition. We define the horizontal gradient as $\nabla_\X u=(X_1u,...,X_mu)$ and the symmetrized horizontal Hessian $(D^2_\X u)_{ij}=\frac{X_iX_ju+X_jX_iu}{2}$, while the horizontal divergence is defined as $\mathrm{div}_\X(\Phi(x))=\sum_{i=1}^mX_i^*\Phi_i$, $X_i^*$ being the formal adjoints of the $X_i$'s. We also denote by $B_r^\X(x)=\{y\in\R^N:\rho<R\}$, where $\rho$ stands for a given homogeneous norm. In this case, it holds
\begin{equation}\label{balls}
\mathrm{Vol}(B_r^\X)=c r^\mathcal{Q}\ ,r>0,c>0,
\end{equation}
$\mathcal{Q}$ being the corresponding homogeneous dimension. We denote by $W^{1,r}_{\X,\mathrm{loc}}(\Omega)=\{u\in L^r_{\mathrm{loc}}(\Omega):X_iu\in L^r_{\mathrm{loc}}(\Omega)\ ,i=1,..,m\}$ the standard (local) horizontal Sobolev space.\\
Let $\Omega$ be a bounded open set in $\R^N$, $N\geq1$. We consider the following degenerate PDI
\begin{equation}\label{eqgendeg}
-\mathrm{div}_\X (\mathcal{A}(x,u,\nabla_\X u))\geq \mathcal{B}_3(x,u,\nabla_\X u)\text{ in }\Omega\ ,
\end{equation}
where $\mathcal{A}:\Omega\times\R\times\R^N\to\R^N$ is a Carath\'eodory function such that \eqref{A} holds and
\begin{equation}\label{B1deg}\tag{B3}
\mathcal{B}_3(x,s,\xi)\geq c_H|\xi|^\gamma-f(x).
\end{equation}
We first give the horizontal counterpart of Lemma \ref{cac}.
\begin{lemma}\label{cacHorm}
Let $\gamma>p-1$, $f\in \mathcal{L}^{1,\frac{\mathcal{Q}}{q}}(\Omega)$, $q\geq1$. Assume that \eqref{A} and \eqref{B1deg} hold. Let $u$ be a distributional supersolution of \eqref{eqgendeg}. Then, for every pair of concentric balls $B_t^\X\subset B_R^\X\subset \Omega$ we have
\[
\int_{B_t^\X}|\nabla_\X u|^\gamma\,dx\leq K\frac{R^\mathcal{Q}}{(R-t)^s}
\]
where $s=\max\left\{\frac{\mathcal{Q}}{q},\frac{\gamma}{\gamma-(p-1)}\right\}$ and $K$ is a constant depending on $\nu,\gamma,q,\mathcal{Q},c_H$ and on $\|f\|_{\mathcal{L}^{1,\frac{\mathcal{Q}}{q}}(B_R^\X)}$. In particular, the result holds when $f\in L^q$, $q>\frac{\mathcal{Q}}{\gamma}$.
\end{lemma}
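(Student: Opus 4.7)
The plan is to mimic the proof of Lemma \ref{cac}, replacing the Euclidean gradient with the horizontal gradient $\nabla_\X$, the Euclidean ball $B_r$ with the gauge ball $B_r^\X$, and the Euclidean dimension $N$ with the homogeneous dimension $\mathcal{Q}$. The only real structural ingredients needed are the existence of a suitable horizontal cut-off and the volume estimate \eqref{balls}.

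First, I would fix concentric gauge balls $B_t^\X \subset B_R^\X \subset \Omega$ and construct a Lipschitz cut-off $\eta$ with $0 \leq \eta \leq 1$, $\eta \equiv 1$ on $B_t^\X$, $\eta \equiv 0$ outside $B_R^\X$, and $|\nabla_\X \eta| \leq C/(R-t)$. Such a cut-off is standard in the Carnot group/Hörmander setting since the gauge distance is horizontally Lipschitz, so composing a smooth Euclidean bump with the homogeneous norm and rescaling gives the required bound on $|\nabla_\X \eta|$. I would then use $\varphi = \eta^{\gamma/(\gamma-(p-1))}$ as a test function in the distributional formulation of \eqref{eqgendeg}, which is admissible by density of smooth functions in horizontal Sobolev spaces on Carnot groups. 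Combining with \eqref{A} and \eqref{B1deg}, this yields
\[
c_H \int_\Omega |\nabla_\X u|^\gamma \eta^{\frac{\gamma}{\gamma-(p-1)}}\,dx \leq \frac{\gamma \nu}{\gamma-(p-1)} \int_\Omega |\nabla_\X u|^{p-1} |\nabla_\X \eta|\, \eta^{\frac{p-1}{\gamma-(p-1)}}\,dx + \int_\Omega f \eta^{\frac{\gamma}{\gamma-(p-1)}}\,dx.
\]

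Next, I would handle the first term on the right-hand side using the weighted Young inequality with conjugate exponents $\left(\frac{\gamma}{\gamma-(p-1)}, \frac{\gamma}{p-1}\right)$, exactly as in Lemma \ref{cac}, to absorb half of $c_H \int |\nabla_\X u|^\gamma \eta^{\gamma/(\gamma-(p-1))}$ into the left-hand side. This leaves a remainder of the form $C \int_{B_R^\X} |\nabla_\X \eta|^{\gamma/(\gamma-(p-1))}\,dx$, which by the bound on $|\nabla_\X \eta|$ and the volume formula \eqref{balls} is dominated by
\[
C \cdot \frac{\mathrm{Vol}(B_R^\X)}{(R-t)^{\gamma/(\gamma-(p-1))}} \leq C \frac{R^\mathcal{Q}}{(R-t)^{\gamma/(\gamma-(p-1))}}.
\]
For the source term, the Morrey bound $f \in \mathcal{L}^{1,\mathcal{Q}/q}(\Omega)$ gives $\int_{B_R^\X} |f|\,dx \leq \|f\|_{\mathcal{L}^{1,\mathcal{Q}/q}} R^{\mathcal{Q}-\mathcal{Q}/q}$, which is bounded by $C R^\mathcal{Q}/(R-t)^{\mathcal{Q}/q}$ up to a multiplicative constant. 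Combining the two contributions and taking the maximum of the exponents yields the claimed bound with $s = \max\{\mathcal{Q}/q, \gamma/(\gamma-(p-1))\}$.

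The main subtlety, and essentially the only point where the sub-Riemannian setting could bite, is ensuring that the Morrey norm is defined with respect to gauge balls consistent with \eqref{balls} and that the cut-off computation uses the correct notion of horizontal gradient; both are now standard in the Carnot group/Hörmander framework, so the Euclidean calculation transfers verbatim. The final sentence about $f \in L^q$ with $q > \mathcal{Q}/\gamma$ follows from Hölder's inequality on gauge balls, exactly as in the Euclidean discussion preceding Lemma \ref{cac}.
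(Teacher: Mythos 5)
Your proposal matches the paper's proof essentially verbatim: the paper simply declares that "the proof is the same as that in Lemma \ref{cac}, the only difference being the choice of a $C^1$ cut-off function $\eta$" with $|\nabla_\X\eta|\leq C/(R-t)$ subordinate to the gauge balls, which is exactly the modification you make, and the rest of your computation (test function $\eta^{\gamma/(\gamma-(p-1))}$, weighted Young's inequality, the volume bound \eqref{balls}, and the Morrey estimate on $f$) is the Euclidean argument transported unchanged. Correct and the same approach.
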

\begin{proof}
The proof is the same as that in Lemma \ref{cac}, the only difference being the choice of a $C^1$ cut-off function $\eta$ such that $0\leq\eta\leq1$, $\eta\equiv1$ on $B_t^\X$, $\eta\equiv0$ outside $B_R^\X$, $|\nabla_\X \eta|\leq \frac{C}{R-t}$.
\end{proof}
As in the Euclidean case, this leads to the corresponding local H\"older continuity for distributional supersolutions combining Lemma \ref{cacHorm} and applying  \cite[Theorem 1.2]{Lug}, as stated in the next. 
\begin{thm}\label{holderHorm}
Assume \eqref{A} and \eqref{B1deg}, $\gamma>p$, and let $f\in \mathcal{L}^{1,\frac{\mathcal{Q}}{q}}(\Omega)$ for some $q>\frac{\mathcal{Q}}{\gamma}$. Let $u\in W^{1,\gamma}_{\mathrm{loc}}(\Omega)$ which satisfies, in the sense of distributions, the inequality
\[
-\mathrm{div}_\X (\mathcal{A}(x,u,\nabla_\X u))\geq |\nabla_\X u|^\gamma+f(x)\text{ in }\Omega\ .
\]
Then $u$ is locally H\"older continuous with exponent 
\[
\alpha=\min\left\{1-\frac{\mathcal{Q}}{q\gamma},\frac{\gamma-p}{\gamma-(p-1)}\right\},
\]
and $K$ depends on $p,q,N,\nu,\Omega,\|f\|_{\mathcal{L}^{1,\frac{\mathcal{Q}}{q}}(\Omega)}$.
\end{thm}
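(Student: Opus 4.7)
The plan is to transpose the Euclidean argument of Theorem \ref{localHolder} (Step 1) to the sub-Riemannian setting, using the horizontal Caccioppoli inequality (Lemma \ref{cacHorm}) in place of Lemma \ref{cac}, and the Carnot-group Morrey-Campanato embedding \cite[Theorem 1.2]{Lug} in place of \cite[Theorem 7.19]{GT}. No global/boundary extension is needed, since the statement only asserts interior Hölder continuity; thus the argument stops where Step 1 of the Euclidean proof ends.

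Fix $x_0 \in \Omega$ and a horizontal ball $B_R^\X(x_0)$ such that $B_{2R}^\X(x_0) \subset \Omega$. For every concentric ball $B_r^\X = B_r^\X(x_0) \subset B_R^\X$, I would apply Lemma \ref{cacHorm} with inner radius $r$ and outer radius $2r$ to obtain
\[
\int_{B_r^\X} |\nabla_\X u|^\gamma \, dx \leq K\, r^{\mathcal{Q} - s}, \qquad s := \max\left\{\frac{\mathcal{Q}}{q},\, \frac{\gamma}{\gamma-(p-1)}\right\},
\]
where the scaling $\mathrm{Vol}(B_r^\X) = c\, r^{\mathcal{Q}}$ from \eqref{balls} is used. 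Hölder's inequality with exponents $(\gamma, \gamma/(\gamma-1))$ then upgrades this into the Morrey-type $L^1$ estimate
\[
\int_{B_r^\X} |\nabla_\X u| \, dx \leq \widetilde{K}\, r^{\mathcal{Q} - s/\gamma} = \widetilde{K}\, r^{\mathcal{Q} - 1 + \alpha},
\]
with $\alpha := 1 - s/\gamma = \min\{1 - \mathcal{Q}/(q\gamma),\, (\gamma-p)/(\gamma-(p-1))\}$, valid uniformly for every horizontal ball inside $B_R^\X$. This is precisely the hypothesis of \cite[Theorem 1.2]{Lug}, so that result yields the local Hölder continuity of $u$ on $B_R^\X$ with exponent $\alpha$ and a quantitative constant depending only on $p,q,\gamma,\mathcal{Q},\nu,\Omega,c_H$ and $\|f\|_{\mathcal{L}^{1,\mathcal{Q}/q}(\Omega)}$, with distances measured in the Carnot-Carathéodory metric.

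The main obstacle is step three: one must check that \cite[Theorem 1.2]{Lug} is indeed the correct horizontal analog of \cite[Theorem 7.19]{GT} for functions in $W^{1,\gamma}_{\X,\mathrm{loc}}$ and delivers a Hölder constant with the claimed dependence on the data. Everything else is a direct transcription of the Euclidean proof: horizontal balls scale polynomially as $r^{\mathcal{Q}}$ by \eqref{balls}, the cut-off $\eta$ used in Lemma \ref{cacHorm} satisfies $|\nabla_\X \eta| \leq C/(R-t)$ exactly as in the Euclidean case, and the algebra leading to the exponent $\alpha$ is identical once $N$ is swapped for $\mathcal{Q}$. A minor bookkeeping issue is to guarantee that the doubled ball $B_{2r}^\X(x_0)$ remains inside $\Omega$ when Lemma \ref{cacHorm} is applied; fixing $B_{2R}^\X(x_0) \subset \Omega$ from the outset, as above, takes care of it.
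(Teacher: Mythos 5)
Your proposal is correct and follows exactly the paper's approach: the paper indeed proves this by combining Lemma \ref{cacHorm} with the subelliptic Morrey--Campanato embedding of Lu, just as you do, and the algebra converting $s=\max\{\mathcal{Q}/q,\gamma/(\gamma-(p-1))\}$ into the exponent $\alpha=1-s/\gamma$ matches. The paper does not spell out Step 1 again, stating only that the proof combines Lemma \ref{cacHorm} with \cite[Theorem 1.2]{Lug}; your write-up simply makes that combination explicit, including the Hölder step $\int_{B_r^\X}|\nabla_\X u|\le(\int_{B_r^\X}|\nabla_\X u|^\gamma)^{1/\gamma}(\mathrm{Vol}(B_r^\X))^{1-1/\gamma}$ and the use of $\mathrm{Vol}(B_r^\X)=cr^{\mathcal{Q}}$, which is accurate.
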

\begin{rem}
Similarly to what obtained in Corollary \ref{main2}, Lemma \ref{cacHorm} leads to a Liouville property for distributional supersolutions (or solutions) when $\mathcal{B}(\nabla_\X u)=|\nabla_\X u|^\gamma$ and
\[
p-1<\gamma\leq \frac{\mathcal{Q}(p-1)}{\mathcal{Q}-1}
\]
These properties have been studied in detail in e.g. \cite{DM}.
\end{rem}
\subsection{Problems posed on Riemannian manifolds}\label{sec;riem}
We end this section with some extensions of the Liouville property for supersolutions in the context of noncompact geodesically complete Riemannian manifolds. In what follows, $M$ will be a smooth connected, noncompact, complete $N$-dimensional Riemannian manifold, while for a fixed origin $o\in M$, we denote by $r(x)$ the distance function from $o$ and, as above, with $B_r$, $\partial B_r$ the geodesic ball and the sphere of radius $r>0$ centered at $o$. We will assume that $\partial B_r$ is smooth for any $r>0$. As discussed in the introduction of \cite{RS} (see also \cite[Theorem 1.1]{PRSjfa}) this is not much restrictive. Moreover, $\mathrm{Vol}(B_r)$ stands for the Riemannian measure of $B_r$, while $\mathrm{area}(\partial B_r)$ the induced measure of $\partial B_r$. The volume growth of the manifold stands for the growth rate of the function $r\longmapsto \mathrm{Vol}(B_r)$, while the area growth of the manifold for the growth rate of $r\longmapsto \mathrm{area}(\partial B_r)$. We will denote by $d\mathrm{vol}_N$ the canonical Riemannian measure, and by $d\mathrm{vol}_{N-1}$ the corresponding $(N-1)$-Hausdorff measure. \\
It is well-known that the Liouville property for subharmonic functions bounded from above, namely the parabolicity of the manifold, is strictly related with its volume growth, and it holds under the condition
\begin{equation}\label{area}
\int^{+\infty}\frac{1}{(\mathrm{area}(\partial B_t(o)))}\,dt=+\infty,
\end{equation}
see \cite[Theorem 7.5]{GriAMS}. Moreover, these properties are tied up with the recurrence properties of the corresponding Brownian motion on $M$ and the existence of Green functions, cf \cite[Theorem 5.1]{GriAMS}.
This in particular implies that $M=\R^2$ is parabolic, i.e. the one-side Liouville property for subharmonic functions bounded from above holds in the plane.\\
Still, it is known, as discussed in \cite{GriAMS}, that the condition
\begin{equation}\label{vol}
\int^{+\infty}\frac{t}{(\mathrm{Vol}(B_t(o)))}\,dt=+\infty
\end{equation}
is sufficient to derive the one-side property, but it is not necessary. Finally, we emphasize that \eqref{area} always follows from \eqref{vol} by \cite[Proposition 1.3]{RS}, but the converse does not hold in general. Therefore, in the study of the next properties we focus on conditions involving $\mathrm{area}(\partial B_r)$ rather than those in terms of $\mathrm{Vol}(B_r)$ or on the optimal parameters for the exponents $\gamma,p$ appearing in the equation (which is the case when $M=\R^N$).\\
The next result provides a nonlinear version of \eqref{area} for quasi-linear equations with power-growth nonlinearities of Hamilton-Jacobi type. To prove Theorem \ref{main3} we state the following useful result, which generalizes Remark \ref{simp}.
\begin{lemma}\label{prelman}
Let $u$ be a nonconstant distributional supersolution to the inequality 
\begin{equation}\label{hjmanifold2}
-\mathrm{div} (\mathcal{A}(x,u,\nabla u))\geq c_H|\nabla u|^\gamma\text{ in }M.
\end{equation}
Then, there exists $\overline{R}>0$ and a constant $C$ depending on $\nu,p,\gamma,N$ such that for every $r>R\geq\overline{R}$ we have
\begin{equation}
\left(\int_{B_R}|\nabla u|^\gamma\,d\mathrm{vol}_N\right)^{-\frac{\gamma-(p-1)}{p-1}}\geq C\int_R^r\left(\frac{1}{\mathrm{area}(\partial B_t)}\right)^{\frac{\gamma-(p-1)}{p-1}}\,dt\ .
\end{equation}
\end{lemma}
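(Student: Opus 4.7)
The plan is to mimic the Euclidean argument outlined in Remark \ref{simp}, but now working with the Riemannian divergence, the intrinsic co-area formula and the induced $(N-1)$-Hausdorff measure on the geodesic spheres $\partial B_t$, which by assumption are smooth. First I would make rigorous the integration of \eqref{hjmanifold2} on a geodesic ball $B_r$: proceeding as in Remark \ref{weakdiv}, I choose a Lipschitz radial cutoff $\psi_\varepsilon$ that equals $1$ on $B_r$, decays linearly to $0$ on the shell $B_{r+\varepsilon}\setminus B_r$, and use $\varphi=\rho\psi_\varepsilon$ with a smooth compactly supported $\rho\equiv 1$ on $\overline{B}_{r+\varepsilon}$ in the distributional definition of supersolution. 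Together with \eqref{A} and \eqref{B2}, letting $\varepsilon\to 0^+$ and applying the Riemannian co-area formula to the limit of $\varepsilon^{-1}\int_{B_{r+\varepsilon}\setminus B_r}|\nabla u|^{p-1}\,d\mathrm{vol}_N$ yields
\[
c_H\int_{B_r}|\nabla u|^\gamma\,d\mathrm{vol}_N\leq \nu \int_{\partial B_r}|\nabla u|^{p-1}\,d\mathrm{vol}_{N-1}.
\]

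Next I would apply H\"older's inequality on $\partial B_r$ with the conjugate pair $\bigl(\frac{\gamma}{p-1},\frac{\gamma}{\gamma-(p-1)}\bigr)$ to bound the right-hand side by
\[
\nu\Bigl(\int_{\partial B_r}|\nabla u|^\gamma\,d\mathrm{vol}_{N-1}\Bigr)^{\frac{p-1}{\gamma}}(\mathrm{area}(\partial B_r))^{\frac{\gamma-(p-1)}{\gamma}}.
\]
Setting $\sigma(r)=\int_{B_r}|\nabla u|^\gamma\,d\mathrm{vol}_N$, the Riemannian co-area formula gives $\sigma'(r)=\int_{\partial B_r}|\nabla u|^\gamma\,d\mathrm{vol}_{N-1}$ for a.e.\ $r$. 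Since $u$ is not constant, there exists $\overline{R}$ such that $\sigma(R)>0$ for every $R\geq\overline{R}$. Combining the two displayed inequalities and raising to the power $\gamma/(p-1)$ I obtain the separable differential inequality
\[
\sigma'(r)\geq C(\nu,p,\gamma)\,\sigma(r)^{\frac{\gamma}{p-1}}\,(\mathrm{area}(\partial B_r))^{-\frac{\gamma-(p-1)}{p-1}},\qquad r\geq \overline{R}.
\]

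Finally I would rewrite the inequality as
\[
-\frac{d}{dr}\Bigl[\tfrac{p-1}{\gamma-(p-1)}\,\sigma(r)^{-\frac{\gamma-(p-1)}{p-1}}\Bigr]\geq C(\nu,p,\gamma)\,(\mathrm{area}(\partial B_r))^{-\frac{\gamma-(p-1)}{p-1}}
\]
and integrate over $[R,r]$ for $r>R\geq\overline{R}$. Dropping the nonpositive term $-\sigma(r)^{-(\gamma-(p-1))/(p-1)}$ on the left yields the claimed estimate, with constant $C$ depending only on $\nu,p,\gamma$ (and absorbing the factor $\frac{p-1}{\gamma-(p-1)}$ into the final $C$).

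The main obstacle is the rigorous justification of the weak divergence step on $M$: one needs to identify the $\varepsilon\to 0^+$ limit of the boundary-shell integral with $\int_{\partial B_r}|\nabla u|^{p-1}\,d\mathrm{vol}_{N-1}$ for a.e.\ $r$, which in the Riemannian setting relies on the smoothness of $\partial B_r$ and the intrinsic co-area formula; this is precisely why the assumption that $\partial B_t$ is smooth for every $t>0$ (and the regularization procedure recalled from \cite{RS,PRSjfa,PSNotes,Chen}) is invoked. The separable ODE step and the H\"older inequality step are then routine, essentially identical to the Euclidean computation \eqref{alpha<1}.
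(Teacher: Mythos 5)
Your proposal is correct and follows essentially the same route as the paper: the paper likewise reduces to the weak divergence theorem plus co-area formula (citing Remark \ref{weakdiv} rather than repeating the cutoff construction as you do), applies H\"older on $\partial B_r$ to get the differential inequality $\sigma'(r)\geq C\,\sigma(r)^{\gamma/(p-1)}(\mathrm{area}(\partial B_r))^{-(\gamma-(p-1))/(p-1)}$, and integrates over $[R,r]$.
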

\begin{proof}
Let $X$ be the vector field $X:=\mathcal{A}(x,u,\nabla u)$. The proof follows the same steps of Remark \ref{simp} integrating $\mathrm{div}(X)$ and using the (weak) divergence theorem together with the co-area formula, cf Remark \ref{weakdiv}, but we omit this step for brevity. We thus have
\[
-\int_{B_r}\mathrm{div}(\mathcal{A}(x,u,\nabla u))\,d\mathrm{vol}_N\leq \nu\left(\int_{\partial B_r}|\nabla u|^{\gamma}\,\,d\mathrm{vol}_{N-1}\right)^{\frac{p-1}{\gamma}}(\mathrm{area}(\partial B_r))^{\frac{\gamma-(p-1)}{\gamma}}
\]
We set $\sigma(r)=\int_{B_r}|\nabla u|^\gamma\,dx$. Hence, by the co-area formula it follows that \[\sigma'(r)=\int_{\partial B_r}|\nabla u|^\gamma\,d\mathrm{vol}_{N-1}.\]
We then get
\[
\sigma'(r)\geq C(\nu,\gamma,p,c_H)(\sigma(r))^{\frac{\gamma}{p-1}}(\mathrm{area}(\partial B_r))^{-\frac{\gamma-(p-1)}{p-1}},
\]
which reads equivalently as
\[
\sigma'(r)(\sigma(r))^{-\frac{\gamma}{p-1}}\geq C(\nu,\gamma,p,c_H)(\mathrm{area}(\partial B_r))^{-\frac{\gamma-(p-1)}{p-1}}.
\]
We now integrate on $[R,r]$ and obtain that
\begin{align*}
\frac{p-1}{\gamma-(p-1)}\frac{1}{(\sigma(R))^{\frac{\gamma-(p-1)}{p-1}}}&\geq \frac{p-1}{\gamma-(p-1)}\left(\frac{1}{(\sigma(R))^{\frac{\gamma-(p-1)}{p-1}}}-\frac{1}{(\sigma(r))^{\frac{\gamma-(p-1)}{p-1}}}\right)\\
&\geq C(\nu,\gamma,p,c_H)\int_R^r\left(\frac{1}{\mathrm{area}(\partial B_t)}\right)^{\frac{\gamma-(p-1)}{p-1}}\,dt.
\end{align*}

\end{proof}
\begin{proof}[Proof of Theorem \ref{main3}]
If $u$ were not constant on the ball $B_{\widetilde{R}}$, one can apply Lemma \ref{prelman} and find the desired contradiction using \eqref{areanl}.
\end{proof}
\begin{rem}
The previous result is different from the one in Corollary \ref{main2} and must be understood in the following sense: for $\gamma,p$ fixed, \eqref{areanl} highlights how the area of the geodesic sphere can grow to ensure the validity of the Liouville property on a generic manifold without assuming any curvature restriction. This is in line with the recent analyses carried out in \cite{Gricpam,Sun} and the earlier works \cite{RS,PRSjfa}, where the Liouville properties have been studied in terms of the volume (and not the area) growth of the geodesic balls. We emphasize that in view of \cite[Proposition 1.3]{RS} one has for any $\delta>0$
\[
\int^{+\infty}\left(\frac{r}{(\mathrm{Vol}(B_r(o))}\right)^{\frac{1}{\delta}}=+\infty\implies \int^{+\infty}\left(\frac{1}{(\mathrm{area}(\partial B_r(o))}\right)^{\frac{1}{\delta}}=+\infty,
\]
but the converse is not true in general, unless some curvature conditions are imposed on the manifold, see \cite{RS}.
\end{rem}
\begin{rem}
The same area-growth condition could have been obtained through the scheme outlined in Section \ref{alt} for nonnegative solutions.
\end{rem}
\begin{rem}
Although this result has been widely analyzed in the literature in various different settings, we emphasize that the Liouville property under these area-growth conditions seem new. Moreover, though the approach to derive the Liouville property is mainly inspired by \cite{Lions85,RS}, the results of Theorem \ref{main3} cannot be recovered from these works.
\end{rem}
\begin{rem}
The previous result gives a critical condition in terms of the area of the geodesic spheres for the solvability of the equation in $M$, and can be seen as a generalization of the condition found in \cite{HMV99} in the context of Riemannian manfiolds. It is worth noting that when $M=\R^N$ condition \eqref{area} leads to the restriction $p-1<\gamma\leq\frac{N(p-1)}{N-1}$ found in Section \ref{sec;lio}.
\end{rem} 
\begin{rem}
It is worth remarking that Corollary \ref{main2} leads to the Liouville property for nonnegative solutions to an inequality involving powers of the unknown functions and its gradient of the form
\[
-\Delta_p u\geq u^m|\nabla u|^\gamma\text{ in }\R^N
\]
when
\[
(N-p)m+(N-1)\gamma<N(p-1)\ ,m\geq0\ ,\gamma>1\ ,
\]
cf \cite[Remark 1.1]{FPS} for the case $p=2$. This can be done through the transformation $v=u^b$ for a suitable $b>0$, which reduces the previous inequality to an inequality like $-\Delta_pv\geq c_H|\nabla v|^\gamma$ for $\gamma<\frac{N(p-1)}{N-1}$ for some suitable $c_H>0$ depending on $p,\gamma,m$, and thus allows to exploit Corollary \ref{main2}, see e.g. \cite{VeronDuke,CGsurvey}.
\end{rem}
\begin{rem}
As remarked in \cite[p.489]{RS} or in \cite[Theorem 4.6 and Remark 4.8]{IPS} using the same techniques of the present paper one can obtain similar results for supersolutions to elliptic problems equipped with Neumann boundary conditions posed on manifolds with boundary.
\end{rem}

\begin{thebibliography}{10}

\bibitem{BC}
M.~Bardi and A.~Cesaroni.
\newblock Liouville properties and critical value of fully nonlinear elliptic
  operators.
\newblock {\em J. Differential Equations}, 261(7):3775--3799, 2016.

\bibitem{BG}
M.~Bardi and A.~Goffi.
\newblock Liouville results for fully nonlinear equations modeled on
  {H}\"ormander vector fields: {I}. {T}he {H}eisenberg group.
\newblock To appear in Mat. Ann., doi:10.1007/s00208-020-02118-x, 2020.

\bibitem{B}
G.~Barles.
\newblock A short proof of the {$C^{0,\alpha}$}-regularity of viscosity
  subsolutions for superquadratic viscous {H}amilton-{J}acobi equations and
  applications.
\newblock {\em Nonlinear Anal.}, 73(1):31--47, 2010.

\bibitem{BBF}
L.~Beck, M.~Bul\'{\i}\v{c}ek, and J.~Frehse.
\newblock Old and new results in regularity theory for diagonal elliptic
  systems via blowup techniques.
\newblock {\em J. Differential Equations}, 259(11):6528--6572, 2015.

\bibitem{BensoussanBook}
A.~Bensoussan and J.~Frehse.
\newblock {\em Regularity results for nonlinear elliptic systems and
  applications}, volume 151 of {\em Applied Mathematical Sciences}.
\newblock Springer-Verlag, Berlin, 2002.

\bibitem{PucciBook}
B.~Bianchini, L.~Mari, P.~Pucci, and M.~Rigoli.
\newblock {\em Geometric analysis of quasilinear inequalities on complete
  manifolds---maximum and compact support principles and detours on manifolds}.
\newblock Frontiers in Mathematics. Birkh\"{a}user/Springer, Cham, 2021.

\bibitem{VeronJFA}
M.-F. Bidaut-V\'{e}ron, M.~Garcia-Huidobro, and L.~V\'{e}ron.
\newblock Local and global properties of solutions of quasilinear
  {H}amilton-{J}acobi equations.
\newblock {\em J. Funct. Anal.}, 267(9):3294--3331, 2014.

\bibitem{VeronDuke}
M.-F. Bidaut-V\'{e}ron, M.~Garc\'{\i}a-Huidobro, and L.~V\'{e}ron.
\newblock Estimates of solutions of elliptic equations with a source reaction
  term involving the product of the function and its gradient.
\newblock {\em Duke Math. J.}, 168(8):1487--1537, 2019.

\bibitem{BDL}
I.~Birindelli, F.~Demengel, and F.~Leoni.
\newblock Ergodic pairs for singular or degenerate fully nonlinear operators.
\newblock {\em ESAIM Control Optim. Calc. Var.}, 25:Paper No. 75, 28, 2019.

\bibitem{QuaasDCDS}
M.~A. Burgos-P\'{e}rez, J.~Garc\'{\i}a-Meli\'{a}n, and A.~Quaas.
\newblock Classification of supersolutions and {L}iouville theorems for some
  nonlinear elliptic problems.
\newblock {\em Discrete Contin. Dyn. Syst.}, 36(9):4703--4721, 2016.

\bibitem{CDLP}
I.~Capuzzo~Dolcetta, F.~Leoni, and A.~Porretta.
\newblock H\"older estimates for degenerate elliptic equations with coercive
  {H}amiltonians.
\newblock {\em Trans. Amer. Math. Soc.}, 362(9):4511--4536, 2010.

\bibitem{CaristiMitidieri}
G.~Caristi and E.~Mitidieri.
\newblock Nonexistence of positive solutions of quasilinear equations.
\newblock {\em Adv. Differential Equations}, 2(3):319--359, 1997.

\bibitem{Chen}
Q.~Chen.
\newblock Liouville theorem for harmonic maps with potential.
\newblock {\em Manuscripta Math.}, 95(4):507--517, 1998.

\bibitem{CGpar}
M.~Cirant and A.~Goffi.
\newblock Maximal {$L^q$}-regularity for parabolic {H}amilton-{J}acobi
  equations and applications to {M}ean {F}ield {G}ames.
\newblock {\em Ann. PDE}, 7(2):Paper No. 19, 40, 2021.

\bibitem{CGsurvey}
M.~Cirant and A.~Goffi.
\newblock On the {L}iouville property for fully nonlinear equations with
  superlinear first-order terms.
\newblock arXiv:2107.13262, to appear in {C}ontemporary {M}athematics ({CONM}),
  {AMS}, {P}roceedings of the conference ``{G}eometric and {F}unctional
  Inequalities and {R}ecent {T}opics in {N}onlinear {PDE}s'', 2021.

\bibitem{CGell}
M.~Cirant and A.~Goffi.
\newblock On the problem of maximal {$L^q$}-regularity for viscous
  {H}amilton-{J}acobi equations.
\newblock {\em Arch. Ration. Mech. Anal.}, 240(3):1521--1534, 2021.

\bibitem{CV}
M.~Cirant and G.~Verzini.
\newblock Local {H}\"older and maximal regularity of solutions of elliptic
  equations with superquadratic gradient terms.
\newblock arXiv:2203.06092, 2022.

\bibitem{DP}
A.~Dall'Aglio and A.~Porretta.
\newblock Local and global regularity of weak solutions of elliptic equations
  with superquadratic {H}amiltonian.
\newblock {\em Trans. Amer. Math. Soc.}, 367(5):3017--3039, 2015.

\bibitem{DM}
L.~D'Ambrosio and E.~Mitidieri.
\newblock A priori estimates and reduction principles for quasilinear elliptic
  problems and applications.
\newblock {\em Adv. Differential Equations}, 17(9-10):935--1000, 2012.

\bibitem{EG}
L.~C. Evans and R.~F. Gariepy.
\newblock {\em Measure theory and fine properties of functions}.
\newblock Textbooks in Mathematics. CRC Press, Boca Raton, FL, revised edition,
  2015.

\bibitem{FilippucciJDE2011}
R.~Filippucci.
\newblock Nonexistence of nonnegative solutions of elliptic systems of
  divergence type.
\newblock {\em J. Differential Equations}, 250(1):572--595, 2011.

\bibitem{FPS}
R.~Filippucci, P.~Pucci, and P.~Souplet.
\newblock A {L}iouville-type theorem for an elliptic equation with
  superquadratic growth in the gradient.
\newblock {\em Adv. Nonlinear Stud.}, 20(2):245--251, 2020.

\bibitem{GT}
D.~Gilbarg and N.~S. Trudinger.
\newblock {\em Elliptic partial differential equations of second order}, volume
  224 of {\em Grundlehren der Mathematischen Wissenschaften [Fundamental
  Principles of Mathematical Sciences]}.
\newblock Springer-Verlag, Berlin, second edition, 1983.

\bibitem{GMP}
N.~Grenon, F.~Murat, and A.~Porretta.
\newblock A priori estimates and existence for elliptic equations with gradient
  dependent terms.
\newblock {\em Ann. Sc. Norm. Super. Pisa Cl. Sci. (5)}, 13(1):137--205, 2014.

\bibitem{GriAMS}
A.~Grigor'yan.
\newblock Analytic and geometric background of recurrence and non-explosion of
  the {B}rownian motion on {R}iemannian manifolds.
\newblock {\em Bull. Amer. Math. Soc. (N.S.)}, 36(2):135--249, 1999.

\bibitem{Gricpam}
A.~Grigor'yan and Y.~Sun.
\newblock On nonnegative solutions of the inequality {$\Delta u+u^\sigma\leq0$}
  on {R}iemannian manifolds.
\newblock {\em Comm. Pure Appl. Math.}, 67(8):1336--1352, 2014.

\bibitem{HMV99}
K.~Hansson, V.~G. Maz'ya, and I.~E. Verbitsky.
\newblock Criteria of solvability for multidimensional {R}iccati equations.
\newblock {\em Ark. Mat.}, 37(1):87--120, 1999.

\bibitem{IPS}
D.~Impera, S.~Pigola, and A.~G. Setti.
\newblock Potential theory for manifolds with boundary and applications to
  controlled mean curvature graphs.
\newblock {\em J. Reine Angew. Math.}, 733:121--159, 2017.

\bibitem{LU}
O.~A. Ladyzhenskaya and N.~N. Ural'tseva.
\newblock {\em Linear and quasilinear elliptic equations}.
\newblock Academic Press, New York-London, 1968.

\bibitem{LP}
T.~Leonori and A.~Porretta.
\newblock Large solutions and gradient bounds for quasilinear elliptic
  equations.
\newblock {\em Comm. Partial Differential Equations}, 41(6):952--998, 2016.

\bibitem{Lions85}
P.-L. Lions.
\newblock Quelques remarques sur les probl\`emes elliptiques quasilin\'{e}aires
  du second ordre.
\newblock {\em J. Analyse Math.}, 45:234--254, 1985.

\bibitem{Lug}
G.~Lu.
\newblock Embedding theorems into {L}ipschitz and {BMO} spaces and applications
  to quasilinear subelliptic differential equations.
\newblock {\em Publ. Mat.}, 40(2):301--329, 1996.

\bibitem{MeierTAMS}
M.~Meier.
\newblock Liouville theorems, partial regularity and {H}\"{o}lder continuity of
  weak solutions to quasilinear elliptic systems.
\newblock {\em Trans. Amer. Math. Soc.}, 284(1):371--387, 1984.

\bibitem{NP}
Q.-H. Nguyen and N.~C. Phuc.
\newblock Quasilinear {R}iccati-type equations with oscillatory and singular
  data.
\newblock {\em Adv. Nonlinear Stud.}, 20(2):373--384, 2020.

\bibitem{PS}
L.~A. Peletier and J.~Serrin.
\newblock Gradient bounds and {L}iouville theorems for quasilinear elliptic
  equations.
\newblock {\em Ann. Scuola Norm. Sup. Pisa Cl. Sci. (4)}, 5(1):65--104, 1978.

\bibitem{PRSmem}
S.~Pigola, M.~Rigoli, and A.~G. Setti.
\newblock Maximum principles on {R}iemannian manifolds and applications.
\newblock {\em Mem. Amer. Math. Soc.}, 174(822):x+99, 2005.

\bibitem{PRSjfa}
S.~Pigola, M.~Rigoli, and A.~G. Setti.
\newblock Vanishing theorems on {R}iemannian manifolds, and geometric
  applications.
\newblock {\em J. Funct. Anal.}, 229(2):424--461, 2005.

\bibitem{PRS}
S.~Pigola, M.~Rigoli, and A.~G. Setti.
\newblock Aspects of potential theory on manifolds, linear and non-linear.
\newblock {\em Milan J. Math.}, 76:229--256, 2008.

\bibitem{PSNotes}
S.~Pigola and A.~G. Setti.
\newblock {\em Global divergence theorems in nonlinear {PDE}s and geometry},
  volume~26 of {\em Ensaios Matem\'{a}ticos [Mathematical Surveys]}.
\newblock Sociedade Brasileira de Matem\'{a}tica, Rio de Janeiro, 2014.

\bibitem{RS}
M.~Rigoli and A.~G. Setti.
\newblock Liouville type theorems for {$\phi$}-subharmonic functions.
\newblock {\em Rev. Mat. Iberoamericana}, 17(3):471--520, 2001.

\bibitem{SZ}
J.~Serrin and H.~Zou.
\newblock Cauchy-{L}iouville and universal boundedness theorems for quasilinear
  elliptic equations and inequalities.
\newblock {\em Acta Math.}, 189(1):79--142, 2002.

\bibitem{SunCpaa}
Y.~Sun.
\newblock On the uniqueness of nonnegative solutions of differential
  inequalities with gradient terms on {R}iemannian manifolds.
\newblock {\em Commun. Pure Appl. Anal.}, 14(5):1743--1757, 2015.

\bibitem{Sun}
Y.~Sun and F.~Xu.
\newblock Liouville's theorem to quasilinear differential inequalities
  involving gradient nonlinearity term on manifolds.
\newblock arXiv:2102.02073, 2021.

\end{thebibliography}

\end{document}